
\documentclass{amsart}
\usepackage{color}
\usepackage{mathabx}
\usepackage{mathrsfs}
\usepackage{enumerate}
%
%
%
 \newtheorem{thm}{Theorem}[section]
 \newtheorem{cor}[thm]{Corollary}

 \theoremstyle{definition}
 
 \theoremstyle{remark}

 \numberwithin{equation}{section}

\newcommand{\C}{\mathbb{C}}

\newcommand{\N}{\mathbb{N}}

\newcommand*{\Ann}{\ensuremath{\mathrm{Ann\,}}}
\newcommand*{\supp}{\ensuremath{\mathrm{supp\,}}}

\begin{document}

%
%
%
%
%
%
%
%
%

\title[]{Spectral synthesis via moment functions on  hypergroups}

\author{\.Zywilla Fechner }
\address{Institute of Mathematics,\\
				 Lodz University of Technology,\\
				90-924 {\L}\'{o}d\'{z}, \\
				ul. W\'{o}lcza\'{n}ska 215, Poland 
        \\ ORCID 0000-0001-7412-6544}
\email{zfechner@gmail.com }

\vbox{\author{Eszter Gselmann}
\address{University of Debrecen,\\
 H-4002 Debrecen,\\ P.O.Box: 400, Hungary}
\email{gselmann@science.unideb.hu}
}

\author{L\'{a}szl\'{o} Sz\'{e}kelyhidi}
\address{University of Debrecen,\\
 H-4002 Debrecen,\\ P.O.Box: 400, Hungary
\\ ORCID: 0000-0001-8078-6426}
\email{ szekely@science.unideb.hu, lszekelyhidi@gmail.com}

\thanks{
Project no.~K134191 supporting E.~Gselmann and L.~Sz\'{e}kelyhidi, has been implemented by the support provided from the National Research, Development and Innovation Fund of Hungary, financed under the K\_20 funding scheme.
The research of E.~Gselmann has partially been carried out with the help of the project 2019-2.1.11-T\'{E}T-2019-00049,
which has been implemented by the support provided from the National Research, Development
and Innovation Fund of Hungary. 
}

\subjclass{Primary 39B52, 39B72; Secondary 43A45, 43A70}

\keywords{moment function, moment sequence, generalized exponential polynomial, spectral analysis and synthesis, varieties}

\date{\today}

\begin{abstract}
In this paper we continue the discussion about relations between exponential polynomials and generalized moment generating functions on a commutative hypergroup.  We are interested in the following problem: is it true that every finite dimensional variety is spanned by moment functions? Let $m$ be an exponential on $X$.
In our former paper we have proved that if the linear space of all $m$-sine functions in the variety of an $m$-exponential monomial is (at most) one dimensional, then this variety is spanned by moment functions generated by $m$. In this paper we show that this may happen also in cases where the $m$-sine functions span a more than one dimensional subspace in the variety. We recall the notion of a polynomial hypergroup in $d$ variables, describe exponentials on it and give the characterization of the so called $m$-sine functions. Next we show that the Fourier algebra of a polynomial hypergroup in $d$ variables is the polynomial ring in $d$ variables. Finally, using Ehrenpreis--Palamodov Theorem we show that every exponential polynomial on the polynomial hypergroup in $d$ variables is a linear combination of moment functions contained in its variety.
\end{abstract}

\maketitle

\section{Introduction}

In our former paper (see \cite{FecGseSze20AEQ}) we studied the following problem: given a commutative hypergroup $X$, is it true that all exponential polynomials are included in the linear span of all generalized moment functions? A more precise formulation of this problem reads as follows: given an exponential monomial corresponding to the exponential $m$ on the commutative hypergroup $X$, is it true that the (finite dimensional) variety of this exponential monomial has a basis consisting of generalized moment functions associated with the \hbox{exponential $m$}? In \cite{FecGseSze20AEQ}, we proved that a sufficient condition for this is that all $m$-sine functions in the given variety form a one dimensional subspace. Of course, the same holds if all $m$-sine functions in the variety are zero, as in that case every exponential monomial, as well as every generalized moment function in the given variety is a constant multiple of $m$. In addition, it was shown in \cite{FecGseSze20AEQ} that this sufficient 
condition holds on polynomial hypergroups in one variable, for example.
\vskip.2cm

A natural question is if the condition that the $m$-sine functions in the given variety form a linear space of dimension at most one is necessary. The subject of this paper is to study this problem, to give a negative answer, and to support the conjecture that, in fact, every exponential monomial is a linear combination of generalized moment functions in its variety. In particular, we show that if $X$ is a polynomial hypergroup in $d$ variables, then every exponential monomial is the linear combination of generalized moment functions, however, there are exponential monomials, whose variety contains more than one linearly independent sine functions, if $d>1$.
\vskip.2cm

A {\it hypergroup} is a locally compact Hausdorff space $X$ equipped with an involution and a convolution operation defined on the space of all bounded complex regular measures on $X$. For the formal definition, historical background and basic facts about hypergroups we refer to \cite{MR1312826}. In this paper $X$ denotes a commutative hypergroup with identity element $o$, involution $\widecheck{}$\,, and convolution $*$. For each function $f$ on $X$ we define the function $\widecheck{f}$ by $\widecheck{f}(x)=f(\widecheck{x})$.
\vskip.2cm

Given $x$ in $X$ we denote the point mass with support the singleton $\{x\}$ by $\delta_x$. The convolution $\delta_x*\delta_y$ is a compactly supported probability measure on $X$, and for each continuous function $h\colon X\to \mathbb{C}$ the integral
$$ 
\int_X h(t)d(\delta_x*\delta_y)(t)
$$ 
will be denoted by $h(x*y)$. Given $y$ in $X$ the function $x\mapsto h(x*y)$ is the {\it translate} of $h$ by $y$. 
\vskip.2cm

A set of continuous complex valued functions on $X$ is called {\it translation invariant}, if it contains all translates of its elements. A linear translation invariant subspace of all continuous complex valued functions is called a {\it variety}, if it is closed with respect to uniform convergence on compact sets. The smallest variety containing the given function $h$ is called the {\it variety of $h$}, and is denoted by $\tau(h)$. Clearly, it is the intersection of all varieties including $h$. A continuous complex valued function is called an {\it exponential polynomial}, if its variety is finite dimensional. An exponential polynomial is called an {\it exponential monomial}, if its variety is {\it indecomposable}, that is, it is not the sum of two proper subvarieties. The simplest nonzero exponential polynomial is the one having one dimensional variety: it consists of all constant multiples of a nonzero continuous function. Clearly, it is an exponential monomial. If we normalize that function by taking $1$ 
at $o$, then we have the concept of an exponential. Recall that $m$ is an {\it exponential} on $X$ if it is a non-identically zero continuous complex-valued function  satisfying $m(x*y)=m(x) m(y)$ for each $x,y$ in $X$. Exponential monomials and polynomials on commutative hypergroups have been introduced and characterized in \cite{MR3116655, MR3192009, FeSz21}. In the study of exponential polynomials the basic tool is the modified difference (see in \cite{MR3116655}). Here we briefly recall the notion of modified difference. For a given function $f$ in $\mathcal{C}(X)$, $y$ in $X$ and an exponential $m\colon X\to \mathbb{C}$ we define the modified difference:

$$\Delta_{m;y} * f(x)=f(x*y)-m(y)f(x) $$

for all $x$ in $X$. For $y_1,\dots,y_{N+1}$ in $X$ we define

$$\Delta_{m;y_1\dots,y_{N+1}} * f(x)= \Delta_{m;y_1\dots,y_{N}} *[\Delta m;y_{N+1} * f(x)] $$

for all $x$ in $X$. The following characterization theorem of exponential monomials is proved in \cite[Corollary 2.7]{FeSz21}.

\begin{thm}\label{findif}
Let $X$ be a commutative hypergroup. The continuous function $f:X\to\C$ is an exponential monomial if and only if there exists an exponential $m$ and a natural number $n$ such that 
$$
\Delta_{m;y_1,y_2,\dots,y_{n+1}}*f=0
$$ 
holds for each $y_1,y_2,\dots,y_{n+1}$ in $X$.
\end{thm}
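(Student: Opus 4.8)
The plan is to read the modified difference as the operator $\Delta_{m;y}=T_y-m(y)I$, where $T_yf(x)=f(x*y)$ denotes translation by $y$. Because the convolution is commutative and associative one has $T_{x*y}=T_xT_y$, so the family $\{T_y\}$ commutes, and each $\Delta_{m;z}$ commutes with every translation. Consequently the solution set $M=\{g\in\mathcal{C}(X):\Delta_{m;y_1,\dots,y_{n+1}}*g=0\ \text{for all}\ y_i\}$ is linear, translation invariant and closed under uniform convergence on compacts, i.e.\ a variety, and $\tau(f)\subseteq M$ whenever $f\in M$. I would also record, once and for all, that $\tau(f)=\overline{\mathrm{span}}\{T_yf:y\in X\}$, since this right-hand side is already a variety containing $f$ (it is translation invariant because $T_z(T_yf)=T_{z*y}f$ is an average of translates of $f$).

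\textbf{From monomial to annihilation.} Suppose $f$ is an exponential monomial, so $V=\tau(f)$ is finite dimensional and indecomposable. On $V$ the commuting operators $\{T_y\}$ admit a simultaneous decomposition into joint generalized eigenspaces. Each joint eigenvalue is a function $y\mapsto\lambda(y)$, and $T_{x*y}=T_xT_y$ together with $T_o=I$ forces $\lambda(x*y)=\lambda(x)\lambda(y)$ and $\lambda(o)=1$; continuity of $y\mapsto T_y$ makes $\lambda$ continuous, so each $\lambda$ is an exponential. Every generalized eigenspace is translation invariant, hence a subvariety, so indecomposability leaves exactly one eigenvalue $m$, and $\Delta_{m;y}=T_y-m(y)I$ is nilpotent on $V$ for each $y$. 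A commuting family of nilpotents on an $N$-dimensional space generates a nilpotent algebra, so any product of $N=\dim V$ of them annihilates $V$; taking $n+1=N$ gives $\Delta_{m;y_1,\dots,y_{n+1}}*f=0$.

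\textbf{From annihilation to monomial.} Now assume the annihilation holds. An exponential $e\in\tau(f)$ satisfies $\Delta_{m;y}e=(e(y)-m(y))e$, so $\prod_i(e(y_i)-m(y_i))\,e=0$; as $e\neq0$ this forces $e=m$, whence $m$ is the only exponential in $\tau(f)$ and every eigenvalue of every $T_y$ equals $m(y)$. Granting finite dimensionality, the translations act on $\tau(f)$ through a finite dimensional commutative algebra $A$ whose maximal ideals correspond to the exponentials in $\tau(f)$, i.e.\ to $m$ alone; thus $A$ is local, with maximal ideal generated by the nilpotent operators $T_y-m(y)I$. Since $\tau(f)=A\cdot f$ is a cyclic $A$-module, it is isomorphic to $A/\Ann(f)$, again a local ring, which has no nontrivial idempotents and is therefore indecomposable as a module; because subvarieties of $\tau(f)$ are exactly the $A$-submodules, $\tau(f)$ is an indecomposable variety, and $f$ is an exponential monomial.

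\textbf{Finite dimensionality — the main obstacle.} It remains to prove $\dim\tau(f)<\infty$, which I would attempt by induction on $n$. For $n=0$ the equation $f(x*y)=m(y)f(x)$ gives $f=f(o)m$. For the step, $\tau(f)\subseteq\langle f\rangle+\overline{\mathrm{span}}\{\Delta_{m;y}*f:y\in X\}$, and each $\Delta_{m;y}*f$ is killed by the $n$-fold modified difference, so by the inductive hypothesis each $\tau(\Delta_{m;y}*f)$ is finite dimensional. Everything then reduces to showing that the single function $f$ produces only finitely many independent first differences, i.e.\ that $\overline{\mathrm{span}}\{\Delta_{m;y}*f:y\in X\}$ is finite dimensional; choosing a finite basis among them yields $\tau(f)\subseteq\langle f\rangle+\sum_i\tau(\Delta_{m;y^{(i)}}*f)$. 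This is the crux: modulo the lower-order differences the map $y\mapsto\Delta_{m;y}*f$ becomes a vector valued $m$-sine function, satisfying $a(x*y)=m(y)a(x)+m(x)a(y)$ in the quotient, and one must show that a single continuous $f$ can excite only finitely many independent $m$-sine directions. I expect this to be the genuinely hard step and the only place where soft operator theory does not suffice: it is where the continuity and local structure of the hypergroup, together with the generalized moment function machinery of \cite{MR3116655,FeSz21}, must enter, by expressing $f$ through finitely many moment functions of rank at most $n$ generated by $m$ and bounding $\dim\tau(f)$ by their number.
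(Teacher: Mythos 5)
The paper never proves Theorem~\ref{findif}: it is quoted verbatim from \cite[Corollary 2.7]{FeSz21}, so there is no internal proof to compare you against, and I judge your argument on its own merits. Your ``only if'' direction is essentially correct and complete: on the finite dimensional indecomposable variety $\tau(f)$ the commuting translation operators admit a joint generalized eigenspace decomposition, each such eigenspace is a subvariety (finite dimensional subspaces are automatically closed), indecomposability leaves a single joint eigenvalue, and a product of $\dim\tau(f)$ commuting nilpotents vanishes. One small repair: to see that the joint eigenvalue $\lambda$ is an exponential, do not appeal to ``continuity of $y\mapsto T_y$''; instead take a common eigenvector $g$, note $g(x*y)=\lambda(y)g(x)$, put $x=o$ to get $g=g(o)\lambda$ with $g(o)\neq 0$, and continuity and multiplicativity of $\lambda$ follow. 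Your reduction of the converse to finite dimensionality (the only exponential in $\tau(f)$ is $m$, the translation algebra on $\tau(f)$ is local, a cyclic module over a local ring has no nontrivial idempotents, hence is indecomposable) is also sound.

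The step you could not carry out, however, is not merely ``the genuinely hard step'': with the definitions used in this paper it is \emph{false}, so no argument can close that gap. Take the commutative hypergroup $X=\bigoplus_{i\in\N}\mathbb{Z}$ with the discrete topology (a group, hence a hypergroup), the exponential $m=1$, and $f(x)=\sum_i x_i^2$. Modified differences with respect to $m=1$ are ordinary differences, and third differences of a quadratic form vanish, so the annihilation condition holds with $n=2$; yet the functions $\Delta_{1;e_j}*f(x)=2x_j+1$ (where $e_j$ are the canonical generators) are infinitely many linearly independent elements of $\tau(f)$ --- exactly the ``infinitely many independent $m$-sine directions'' you feared --- so $\tau(f)$ is infinite dimensional and $f$ is \emph{not} an exponential monomial in the sense defined here. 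Consequently the equivalence as transcribed in this paper can only be correct if finite dimensionality of $\tau(f)$ is part of the hypothesis, i.e.\ the statement should read: an \emph{exponential polynomial} $f$ is an exponential monomial if and only if the modified difference condition holds; this is presumably how \cite[Corollary 2.7]{FeSz21} is to be understood. Note that this does not harm the present paper: its only internal use of Theorem~\ref{findif} is in the proof of Theorem~\ref{mommon}, where it is applied to moment functions, whose varieties are finite dimensional by \eqref{Eq3}. With finite dimensionality added as a hypothesis your proof is complete; without it, both your attempt and the literal statement fail.
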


If $f$ is nonzero, then $m$ is uniquely determined, and $f$ is called an {\it $m$-exponential monomial}, and its {\it degree} is the smallest $n$ satisfying the property in Theorem \ref{findif}. The degree of each exponential function is zero: in fact, every nonzero exponential monomial of degree zero is a constant multiple of an exponential. For exponential monomials of first degree sine functions provide an example: given an exponential $m$ on $X$ the continuous function $s:X\to\C$ is called an {\it $m$-sine function}, if
$$
s(x*y)=s(x)m(y)+s(y)m(x)
$$
holds for each $x,y$ in $X$. If $s$ is nonzero, then $m$ is uniquely determined, and its degree is $1$.
\vskip.2cm

Important examples for exponential polynomials are provided by the moment functions. 
Let $X$ be a commutative hypergroup, $r$ a positive integer, and for each multi-index $\alpha$ in $\N^r$, let $f_{\alpha}:X\to \C$ be continuous function, such that $f_{\alpha}\ne 0$ for $|\alpha|=0$. We say that $(f_{\alpha})_{\alpha \in \mathbb{N}^{r}}$ is a \emph{generalized 
moment sequence of rank $r$}, if 
\begin{equation}\label{Eq3}
f_{\alpha}(x*y)=\sum_{\beta\leq \alpha} \binom{\alpha}{\beta} f_{\beta}(x)f_{\alpha-\beta}(y)
\end{equation}
holds whenever $x,y$ are in $X$ (see \cite{FecGseSze20}). It is obvious that the variety of $f_{\alpha}$ is finite dimensional: in fact, every translate of $f_{\alpha}$ is a linear combination of the finitely many functions $f_{\beta}$ with $\beta\leq \alpha$, by equation \eqref{Eq3}. The functions in a generalized moment function sequence are called {\it generalized moment functions}. In this paper we shall omit the "generalized" adjective: we simply use the terms {\it moment function sequence} and {\it moment function}. The order of $f_{\alpha}$ in the above moment function sequence is defined as $|\alpha|$.  It follows that moment functions are exponential polynomials. 
\vskip.2cm

The special case of rank $1$ moment sequences leads to the following system of functional equations: 
\begin{equation*}
f_k(x*y)=\sum_{j=0}^k {k \choose j}f_j(x)f_{k-j}(y)
\label{eq:Moment}
\end{equation*}
for all $k=0,1,\dots, N$ and for each $x,y$ in $X$. 
\vskip.2cm

Observe that in every moment function sequence the unique function $f_{(0,0,\dots,0)}$ is an exponential on the hypergroup $X$. In this case we say that the exponential
$m=f_{(0,0,\dots,0)}$ \textit{generates the given moment function sequence}, and that the moment functions in this sequence {\it correspond to $m$}.We may also say that the moment functions in the given moment sequence are {\it $m$-moment functions}. It is also easy to see, that in every moment function sequence $f_{\alpha}$ with $|\alpha|=1$ is an $m$-sine function on $X$, where $m=f_{(0,0,\dots,0)}$. 
\vskip.2cm

The main result in  \cite{FecGseSze20} is that moment sequences of rank $r$ can be described by using Bell polynomials if the underlying hypergroup is an Abelian group. The point is that in this case the situation concerning \eqref{Eq3} can be reduced to the case where the generating exponential is the identically $1$ function and the problem reduces to a problem about polynomials of additive functions. Unfortunately, if $X$ is not a group, then such a reduction is not available, in general.
\vskip.2cm

The following result is important.

\begin{thm}\label{mommon}
Let $X$ be a commutative hypergroup. Then the variety of a nonzero moment function contains exactly one exponential: the generating exponential function.
\end{thm}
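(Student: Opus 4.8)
The plan is to extract the generating exponential $m$ from the variety by iterated modified differences, and then to exclude every other exponential by a direct computation with the difference equation satisfied by $f_\alpha$. Throughout, let $(f_\gamma)_{\gamma\in\N^r}$ be a moment function sequence with generating exponential $m=f_{(0,\dots,0)}$, and let $f_\alpha$ be a fixed nonzero member.

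First I would record the structural fact that makes everything work. Applying the modified difference to \eqref{Eq3} and using $f_{(0,\dots,0)}=m$, the top term cancels, giving
$$
\Delta_{m;y}*f_\alpha=\sum_{\beta\leq\alpha,\ \beta\neq\alpha}\binom{\alpha}{\beta}f_{\alpha-\beta}(y)\,f_\beta,
$$
a linear combination of moment functions $f_\beta$ with $|\beta|<|\alpha|$. Iterating this, $\Delta_{m;y_1,\dots,y_{|\alpha|+1}}*f_\alpha=0$ for all $y_i$, so by Theorem \ref{findif} the function $f_\alpha$ is an $m$-exponential monomial; I write $d$ for its degree, the smallest $n$ with $\Delta_{m;y_1,\dots,y_{n+1}}*f_\alpha=0$.

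For the existence half I would run a degree-reduction argument inside the variety. Since each $\Delta_{m;y}*g$ is a linear combination of a translate of $g$ and of $g$ itself, every iterated difference $\Delta_{m;y_1,\dots,y_j}*f_\alpha$ lies in $\tau(f_\alpha)$. If $g$ is a nonzero $m$-exponential monomial of degree $k\geq1$, there is a $y$ with $\Delta_{m;y}*g\neq0$, and then $\Delta_{m;y}*g$ has degree at most $k-1$; choosing $y_1,\dots,y_d$ successively in this way produces a nonzero element of $\tau(f_\alpha)$ of degree $0$. A nonzero degree-$0$ monomial $g$ satisfies $g(x*y)=m(y)g(x)$, whence $g=g(o)m$ with $g(o)\neq0$, so $m\in\tau(f_\alpha)$. (If $d=0$ then $f_\alpha$ itself is such a multiple of $m$ and there is nothing to do.)

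For the uniqueness half I would use that the solution space $V=\{g\in\mathcal{C}(X):\Delta_{m;y_1,\dots,y_{d+1}}*g=0\text{ for all }y_i\}$ is itself a variety: it is translation invariant because the modified differences commute with translations on a commutative hypergroup, and it is closed under uniform convergence on compact sets. Since $f_\alpha\in V$ we get $\tau(f_\alpha)\subseteq V$. Now if $m'$ is any exponential in $\tau(f_\alpha)$, a direct computation gives $\Delta_{m;y}*m'=(m'(y)-m(y))\,m'$, and by induction
$$
\Delta_{m;y_1,\dots,y_{d+1}}*m'=\Big(\prod_{i=1}^{d+1}\big(m'(y_i)-m(y_i)\big)\Big)m'.
$$
As $m'\in V$ this vanishes for all $y_1,\dots,y_{d+1}$; since $m'$ is nonzero, taking every $y_i$ equal to an arbitrary $y$ forces $(m'(y)-m(y))^{d+1}=0$, i.e. $m'=m$. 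The routine points are the commutation of the modified differences with hypergroup translation and the closedness of $V$; the only genuinely delicate step is the existence half, where one must guarantee that the degree-reduction terminates at a nonzero multiple of $m$ rather than collapsing to zero prematurely, which is exactly ensured by selecting at each stage a parameter $y$ that keeps the current difference nonzero — possible precisely because that function still has positive degree.
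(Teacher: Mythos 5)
Your proof is correct, but it takes a genuinely different route from the paper's in both halves of the argument. The opening step (applying $\Delta_{m;y}$ to \eqref{Eq3} and iterating to conclude that $f_\alpha$ is an $m$-exponential monomial) coincides with the paper's induction. After that the paths diverge. For existence, the paper simply cites the inclusion $\tau(m)\subseteq\tau(f)$ from the results of \cite{MR4217968}, whereas you prove it from scratch by degree reduction: repeatedly applying modified differences inside $\tau(f_\alpha)$, choosing at each stage a parameter keeping the result nonzero, until a nonzero monomial of degree zero — hence a nonzero multiple of $m$ — is produced; this argument is sound, since a degree-$k$ monomial with $k\geq 1$ admits some $y$ with $\Delta_{m;y}*g\neq 0$, and that difference has degree at most $k-1$ by the defining property of the degree. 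For uniqueness, the paper works in the measure algebra $\mathcal M_c(X)$: it invokes from \cite{MR4217968} that $\Ann\tau(m)=M_m$ is a closed maximal ideal in which the measures $\Delta_{m;y}$ span a dense subspace, shows $M_m^{N+2}\subseteq \Ann\tau(f)\subseteq \Ann\tau(m_1)$ for any exponential $m_1$ in $\tau(f)$, and then uses primality of the maximal ideal $\Ann\tau(m_1)$ plus the duality $V^{\perp\perp}=V$ to force $\tau(m)=\tau(m_1)$ and $m_1=m$. You instead stay entirely on the function side: the solution space $V$ of the difference equation of order $d+1$ is a variety containing $\tau(f_\alpha)$, and the eigenvalue-type identity $\Delta_{m;y_1,\dots,y_{d+1}}*m'=\bigl(\prod_{i=1}^{d+1}(m'(y_i)-m(y_i))\bigr)m'$ forces any exponential $m'$ in $V$ to equal $m$. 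The trade-off: your argument is elementary and self-contained, requiring no annihilator, maximal-ideal, or duality machinery; the paper's argument is shorter modulo the cited structural results and yields the ideal-theoretic information (such as $M_m^{N+2}\subseteq\Ann\tau(f)$) that matches the annihilator and Fourier-algebra framework used in the rest of the paper.
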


\begin{proof}
In the moment function sequence in \eqref{Eq3} we show that the only exponential in the variety of $f=f_{\alpha}$ is $m=f_{(0,0,\dots,0)}$. First we show that $f$ is an $m$-exponential monomial of degree at most $|\alpha|$. We prove by induction on $N=|\alpha|$, and the statement is obviously true for $N=0$. Assuming that the statement holds for $|\alpha|\leq N$, we prove it for $|\alpha|= N+1$. By equation \eqref{Eq3}, we have
\begin{equation}\label{difmom}
\Delta_{m;y}*f_{\alpha}(x)=f_{\alpha}(x*y)-m(y)f_{\alpha}(x)=
\end{equation}
$$
f_{\alpha}(x*y)-f_{(0,0,\dots,0)}(y)f_{\alpha}(x)=\sum_{\beta<\alpha} \binom{\alpha}{\beta} f_{\beta}(x) f_{\alpha-\beta}(y).
$$
for each $x,y$ in $X$. The right side, as a function of $x$, is an exponential monomial of degree at most $N=|\alpha|-1$, corresponding to the exponential $m$, by the induction hypothesis. It follows that 
$$
 \Delta_{m;y_1,y_2,\dots,y_{N+1},y}*f_{\alpha}(x)=\Delta_{m;y_1,y_2,\dots,y_{N+1}}*[\Delta_{m;y}*f_{\alpha}](x)=
$$
$$
\sum_{\beta<\alpha} \binom{\alpha}{\beta} [\Delta_{m;y_1,\dots,y_{N+1}}*f_{\beta}(x)] f_{\alpha-\beta}(y)=0,
$$
which proves our first statement.
\vskip.2cm

To prove that $m$ is the only exponential in the variety of $f$ we know, from the results in \cite{MR4217968}, that the annihilator $\Ann \tau(m)$ of the variety $\tau(m)$ of $m$ is a closed maximal ideal $M_m$ in $\mathcal M_c(X)$,  the space of all compactly supported complex Borel measures on $X$, in which the measures $\Delta_{m;y}$ span a dense subspace, and $\tau(m)\subseteq \tau(f)$, hence $\Ann \tau(f)\subseteq M_m$. On the other hand, by the above equation, 
$$
M_m^{N+2}\subseteq \Ann \tau(f),
$$
as the product of any $N+2$ generators of a dense subspace of $M_m$ annihilates the function $f$. It follows that, if an exponential $m_1$ is in $\tau(f)$, then clearly  $\tau(m_1)\subseteq \tau(f)$, hence $\Ann \tau(f)\subseteq \Ann \tau(m_1)$. We conclude that 
$$
M_m^{N+2}\subseteq \Ann \tau(f)\subseteq\Ann \tau(m_1).
$$
As $\Ann \tau(m_1)$ is a maximal ideal, hence it is a prime ideal, as well, consequently we have $M_m\subseteq \Ann \tau(m_1)$. By maximality, it follows that we have $\tau(m)=\tau(m_1)$, which  immediately implies $m_1=m$.
\end{proof}

This theorem can be formulated in the way that every $m$-moment function is an $m$-exponential monomial.
\vskip.2cm

Exponential monomials are the basic building blocks of spectral synthesis. We say that \textit{spectral analysis} holds for a variety, if every nonzero subvariety in the variety contains a nonzero exponential monomial. We say that a variety is {\it synthesizable} if all exponential monomials in the variety span a dense subspace. We say that {\it spectral synthesis holds for} a variety if every subvariety of it is synthesisable. It is obvious that spectral synthesis for a variety implies spectral analysis for it.  If spectral analysis holds for every variaty on $X$, then we say that \textit{spectral analysis holds on $X$}. If every variety on $X$ is  synthesisable, then we say that {\it spectral synthesis holds on $X$}. Clearly, on every commutative hypergroup, spectral synthesis holds for finite dimensional varieties. 
\vskip.2cm

In the light of these definitions our basic problem about the relation between exponential monomials and moment functions can be reformulated: is it true that every finite dimensional variety is spanned by moment functions? If so, then spectral synthesis is rather based on moment functions, not on exponential monomials. Obviously, it is enough to consider indecomposable varieties, and even we may assume that the variety in question is the variety of an exponential monomial. Our result in \cite[Theorem 2.1]{FecGseSze20AEQ} says that if the linear space of all $m$-sine functions in the variety of an $m$-exponential monomial is (at most) one dimensional, then this variety is spanned by moment functions -- obviously, associated with $m$. In the subsequent paragraphs we show that this may happen also in cases where the $m$-sine functions span a more than one dimensional subspace in the variety. 

\section{Polynomial hypergroups}

In this section we recall the definition of a class of hypergroups which will be used in the sequel.
\vskip.2cm

The following definition is taken from \cite[Chapter 3, Section 3.1, p. 133]{MR1312826}. Let $d$ denote a positive integer, let $\mathcal P_n$ denote the set of polynomials in on $\C^d$ of degree at most $n$. Finally, let $\pi$ denote a probability measure on $\C^d$, and we assume that the commutative hypergroup $X$ with convolution $*$, \hbox{involution $\widecheck{}\,$}, and identity $o$ satisfies the following properties: for each $x$ in $X$ there exists a polynomial $Q_x$ on $\C^d$ such that we have:
\begin{enumerate}[(P1)]
	\item  For each nonnegative integer $n$, the set $X_n=\{x:\, x\in X, Q_x\in \mathcal P_n\}$ is a basis of the linear space $\mathcal P_n$.
    \item We have $Q_x(1,1,\dots,1)=1$ for each $x$ in $X$. 
    \item For each $x$ in $X$, we have $Q_{\widecheck{x}}(z)=\overline{Q}_x(z)$ whenever $z$ is in $\supp \pi$.
    \item For each $x,y,w$ in $X$, we have
    $$
    \int_{\C^d} Q_x Q_y Q_w\,d\pi\geq 0.
    $$
\end{enumerate}
In fact, the properties (P1), (P2), (P3), (P4) are not independent -- for the details see \cite[Proposition 3.1.4]{MR1312826}. Property (P4) expresses that the polynomials $Q_x$ for $x$ in $X$ are orthogonal with respect to the measure $\pi$.
\vskip.2cm

In this case, by property (P1), for each $x,y$ in $X$ the polynomial $Q_xQ_y$ admits a unique representation
\begin{equation}\label{linform}
Q_x Q_y=\sum_{w\in X} c(x,y,w) Q_w
\end{equation}
with some complex numbers $c(x,y,w)$. The formula \eqref{linform} is called \textit{linearization formula}, and the numbers $c(x,y,w)$ are called \textit{linearization coefficients}. 
\vskip.2cm

The hypergroup $X$ with convolution $*$ is called a \textit{polynomial hypergroup (in $d$ variables)} if there exists a family $\{Q_x:\,x\in X\}$ of polynomials satisfying the above property and
$$
\delta_x*\delta_y=\sum_{w\in X} c(x,y,w)\delta_w
$$ 
for each $x,y$ in $X$. In this case we say that the polynomial hypergroup $X$ is {\it associated with the family} of polynomials $\{Q_x:\,x\in X\}$. 
\vskip.2cm

The choice $d=1$ is the case of \textit{polynomial hypergroups in one variable}. In this case we may suppose that $X=\N$, the set of natural numbers, and the family of polynomials $\{Q_x:\,x\in X\}$ is in fact a sequence of polynomials $(P_n)_{n\in\N}$, where $P_n$ is of degree $n$ and we always assume that $P_0=1$. Suppose that there exists a Borel measure on a finite interval such that the sequence $(P_n)_{n\in\N}$ is orthogonal with respect to this measure. In this case it turns out that the linearization coefficients $c(k,l,n)$ are zero unless $|k-l|\leq n\leq k+l$. Hence the linearization formula has the form
\begin{equation}\label{linform1}
	P_k P_l=\sum_{l=|k-l|}^{k+l} c(k,l,n) P_n.
\end{equation}
In this situation, the sequence $(P_n)_{n\in\N}$ generates a polynomial hypergroup if and only if the linearization coefficients are nonnegative. 
\vskip.2cm

As a particular example we consider the sequence $(T_n)_{n\in\N}$ of Chebyshev polynomials of the first kind defined by the recursion $T_0(x)=1$, $T_1(x)=x$ and
$$
xT_n(x)=\frac{1}{2}[T_{n-1}(x)+T_{n+1}(x)]
$$
for $n=1,2,\dots$  In this case it is easy to check that the linearization coefficients are nonnegative and the resulting hypergroup is the \textit{Chebyshev hypergroup}. For more about polynomial hypergroups in one variable see also \cite{MR1312826,MR2978690}.
\vskip.2cm

Based on \cite[3.1.3. (iii)]{MR1312826}, this construction can be generalized for dimension $d>1$.
Indeed, if $K^{(1)}$ and $K^{(2)}$ are polynomial hypergroups, then $K^{(1)}\times K^{(2)}$ can be made into a polynomial hypergroup, as well. The collection 
\[
 \left\{Q_{x}\otimes Q_{y}\, : \, (x, y)\in K^{(1)}\times K^{(2)}\right\}
\]
of polynomials on $\mathbb{C}^{d_{1}+d_{2}}$ defines a hypergroup structure on 
$K^{(1)}\times K^{(2)}$, where $\left\{Q_{x}^{(1)}\, : \, x\in K^{(1)} \right\}$ and 
$\left\{Q_{y}^{(2)}\, : \, y\in K^{(2)} \right\}$ denote the collections of polynomials on 
$\mathbb{C}^{d_{1}}$ and $\mathbb{C}^{d_{2}}$ defining $K^{(1)}$ and $K^{(2)}$, respectively. 

Thus, the notion of Chebyshev hypergroups can also be extended for $d\geq 2$. 

For example, in case $d=2$, after the above construction, a hypergroup arises that can also be defined with the following recursion. 
Let $T_{0,0}(x,y)=1$, $T_{1,0}(x,y)=x$, $T_{0,1}(x,y)=y$, $T_{1,1}(x,y)=xy$, 
for each nonnegative integers $k,n$, let
\[
T_{k, n}(x, y)= 
\begin{cases}
 T_{k}(x), & \text{if } n=0\\
 T_{n}(y), & \text{if } k=0, \\
\end{cases}
\]
furthermore, for each positive integers $k, n$
with $k+n\geq 2$, 
\begin{equation}\label{Ch2}
xyT_{k,n}(x,y)=
\end{equation}
$$
\frac{1}{4}[T_{k-1,n-1}(x,y)+T_{k+1,n-1}(x,y)+T_{k-1,n+1}(x,y)+T_{k+1,n+1}(x,y)].
$$
Clearly, this is satisfied by $T_{k,n}(x,y)=T_k(x)\cdot T_n(y)$, which leads to a two dimensional generalization of the Chebyshev hypergroup on the basic set $X=\N\times \N$. 
\vskip.2cm

From the above recursion formulas we can derive
$$
T_{k,l}\cdot T_{m,n}=\frac{1}{4}[T_{|k-m|,|l-n|}+T_{|k-m|,l+n}+T_{k+m,|l-n|}+T_{k+m,l+n}],
$$
which provides us the convolution of point masses in $X$:
\begin{equation}\label{ConvCh2}
\delta_{k,l}* \delta_{m,n}=\frac{1}{4}[\delta_{|k-m|,|l-n|}+\delta_{|k-m|,l+n}+\delta_{k+m,|l-n|}+\delta_{k+m,l+n}]
\end{equation} 
whenever $k,l,m,n$ are natural numbers. The involution on $X$ is the identity mapping, and the identity is $\delta_{0,0}$.
\vskip.2cm

Exponential functions on $X$ are described in the following theorem (see \cite[Proposition 3.1.2]{MR1312826}, \cite[Theorem 3.1]{MR2978690}).

\begin{thm}\label{expX}
The function $M:\N\times \N\to\C$ is an exponential on $X$ if and only if there exist complex numbers $\lambda,\mu$ such that
\begin{equation}\label{Mform}
M(x,y)=T_x(\lambda)T_y(\mu)
\end{equation}
holds whenever $(x,y)$ are in $X$. The correspondence between the exponentials $M$ and the pairs $(\lambda,\mu)$ is one-to-one.
\end{thm}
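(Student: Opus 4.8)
The plan is to treat the two implications separately and to exploit the fact that the point masses $\delta_{1,0}$ and $\delta_{0,1}$ act as single-step shift operators in the two coordinates, so that the exponential equation collapses onto the classical three-term recursion defining the Chebyshev polynomials of the first kind. Throughout I would use that $X=\N\times\N$ is discrete, so continuity imposes no condition, and that $\delta_{0,0}$ is the identity, whence $M(x*o)=M(x)$ together with $M\not\equiv 0$ forces $M(0,0)=1$.

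For the sufficiency direction, suppose $M(x,y)=T_x(\lambda)T_y(\mu)$. First I would recall the one-variable linearization formula
$$T_a(\lambda)T_b(\lambda)=\tfrac12\bigl[T_{|a-b|}(\lambda)+T_{a+b}(\lambda)\bigr],$$
which is an immediate consequence of the recursion $xT_n(x)=\tfrac12[T_{n-1}(x)+T_{n+1}(x)]$. Then I would factor $M(k,l)M(m,n)=\bigl[T_k(\lambda)T_m(\lambda)\bigr]\bigl[T_l(\mu)T_n(\mu)\bigr]$, apply the linearization formula in each variable separately, and expand the product of the two resulting two-term sums. The four cross terms reproduce exactly the right-hand side of the convolution formula \eqref{ConvCh2}, giving $M\bigl((k,l)*(m,n)\bigr)=M(k,l)M(m,n)$; since $M(0,0)=1$ the function is not identically zero, hence an exponential.

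For the necessity direction, let $M$ be an exponential and set $\lambda=M(1,0)$, $\mu=M(0,1)$. Specializing \eqref{ConvCh2} to the generator $\delta_{1,0}$ collapses it to
$$\delta_{1,0}*\delta_{k,l}=\tfrac12\bigl[\delta_{|k-1|,l}+\delta_{k+1,l}\bigr],$$
so the exponential equation $M(1,0)M(k,l)=M\bigl((1,0)*(k,l)\bigr)$ reads
$$\lambda\,M(k,l)=\tfrac12\bigl[M(|k-1|,l)+M(k+1,l)\bigr].$$
For fixed $l$, writing $a_k=M(k,l)$, this is precisely the Chebyshev recursion $a_{k+1}=2\lambda a_k-a_{k-1}$ for $k\ge 1$, while the boundary value $k=0$ reduces, because $|0-1|=1$, to $a_1=\lambda a_0$. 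As these are exactly the initialization and recursion of $T_k(\lambda)a_0$, induction yields $M(k,l)=T_k(\lambda)\,M(0,l)$. The symmetric computation with $\delta_{0,1}$ gives $M(k,l)=T_l(\mu)\,M(k,0)$. Setting $l=0$ in the first identity produces $M(k,0)=T_k(\lambda)$ and $k=0$ in the second produces $M(0,l)=T_l(\mu)$; substituting back yields $M(x,y)=T_x(\lambda)T_y(\mu)$.

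For the one-to-one correspondence I would observe that the product formula recovers the parameters as $\lambda=M(1,0)$ and $\mu=M(0,1)$, using $T_0=1$ and $T_1(t)=t$, so distinct pairs $(\lambda,\mu)$ yield distinct exponentials, while sufficiency shows every such pair arises. The only delicate point is the boundary case of the recursion, where the involution-induced absolute value $|k-1|$ keeps the index nonnegative; I expect this to be the step needing the most care, but it turns out to collapse cleanly onto the Chebyshev initialization $a_1=\lambda a_0$, so no genuine obstacle remains and the whole argument is elementary once the two generators $\delta_{1,0}$ and $\delta_{0,1}$ are singled out.
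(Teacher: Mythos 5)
Your proof is correct. The sufficiency half is essentially the paper's own computation: expand $M(k,l)M(m,n)$ using the one-variable linearization $T_aT_b=\frac{1}{2}\bigl[T_{|a-b|}+T_{a+b}\bigr]$ and match the four resulting terms against the convolution formula \eqref{ConvCh2}. In the necessity half, however, you take a genuinely different and more self-contained route. The paper substitutes $l=m=0$ and $l=n=0$ into the exponential equation, concludes that the axis restrictions $k\mapsto M(k,0)$ and $n\mapsto M(0,n)$ are exponentials on the one-variable Chebyshev hypergroup, and then \emph{cites} the known characterization of such exponentials from \cite{MR2978690} to write them as $T_k(\lambda)$ and $T_n(\mu)$, finally combining these via the factorization identity $M(k,n)=M(k,0)M(0,n)$ obtained from yet another substitution. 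You instead test the exponential equation only against the two generators $\delta_{1,0}$ and $\delta_{0,1}$, note that this produces exactly the Chebyshev three-term recursion $a_{k+1}=2\lambda a_k-a_{k-1}$ with the boundary case $k=0$ collapsing to $a_1=\lambda a_0$, and solve it by induction to get $M(k,l)=T_k(\lambda)M(0,l)$ and $M(k,l)=T_l(\mu)M(k,0)$; the factorization then falls out as a by-product instead of requiring a separate substitution, and no external classification result is needed. What the paper's route buys is brevity (one substitution plus a citation); what yours buys is a fully elementary argument that re-proves the one-variable result on the fly, together with the explicit parameter recovery $\lambda=M(1,0)$, $\mu=M(0,1)$, which makes the one-to-one correspondence immediate where the paper merely asserts it as clear.
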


\begin{proof}
If $M$ has the given form with some complex numbers $\lambda, \mu$, then for each natural numbers $k,l,m,n$ we have
{\small
$$
M[\delta_{k,l}* \delta_{m,n}]=\int_X M(u,v)\,d(\delta_{k,l}* \delta_{m,n})(u,v)=
$$
$$
\frac{1}{4}[T_{|k-m|}(\lambda)T_{|l-n|}(\mu)+T_{|k-m|}(\lambda)T_{l+n}(\mu)+T_{k+m}(\lambda)T_{|l-n|}(\mu)+T_{k+m}(\lambda)T_{l+n}(\mu)]=
$$
$$
T_{k,l}(\lambda)T_{m,n}(\mu)=T_k(\lambda)T_l(\mu)\cdot T_m(\lambda)T_n(\mu)=M(\delta_{k,l})M(\delta_{m,n}),
$$
}

\noindent further $M(\delta_{0,0})=T_0(\lambda)T_0(\mu)=1$, hence $M$ is an exponential on the hypergroup $X$.
\vskip.2cm

Conversely, suppose that $M:\N\times\N\to\C$ is an exponential on $X$, that is, we have
\begin{equation}\label{Mexp1}
M[\delta_{k,l}* \delta_{m,n}]=M(\delta_{k,l})M(\delta_{m,n})
\end{equation}
for each natural numbers $k,l,m,n$, further $M(\delta_{0,0})=1$. We define the function
$$
f(m,n)=M(\delta_{m,n})
$$
whenever $m,n$ are in $\N$. Then $f$ satisfies
{\small
\begin{equation}\label{Mexp2}
f(k,l)f(m,n)=
\end{equation}
$$
\frac{1}{4}[M(\delta_{|k-m|,|l-n|})+M(\delta_{|k-m|,l+n})+M(\delta_{k+m,|l-n|})+M(\delta_{k+m,l+n})]=
$$
}
{\small
$$
\frac{1}{4}[f(|k-m|,|l-n|)+f(|k-m|,l+n)+f(k+m,|l-n|)+f(k+m,l+n)]
$$
}
for each $k,l,m,n$ in $\N$. Here we substitute $l=m=0$ to get
\begin{equation}\label{Mexp3}
f(k,0)f(0,n)=f(k,n),
\end{equation}
and the substitution $l=n=0$ in \eqref{Mexp2} gives
$$
f(k,0)f(m,0)=\frac{1}{2}[f(|k-m|,0)+f(k+m,0)]
$$
for all $k,m$ in $\N$. As $f(0,0)=1$, the function $k\mapsto f(k,0)$ is an exponential of the Chebyshev hypergroup (see \cite{MR2978690}), hence $f(k,0)=T_k(\lambda)$ for each $k$ in $\N$ with some complex number $\lambda$. Similarly, we have that $f(0,n)=T_n(\mu)$ holds for each $n$ in $\N$ with some complex number $\mu$. It follows, by \eqref{Mexp3}, that
$$
M(\delta_{k,n})=f(k,n)=f(k,0)f(0,n)=T_k(\lambda)T_n(\mu),
$$
and our statement is proved. Clearly, the pair $\lambda,\mu$ is uniquely determined by the exponential 
$M$.
\end{proof}

Using this theorem it is reasonable to denote the exponential corresponding to the pair $(\lambda,\mu)$ in $\C^2$ by $M_{\lambda,\mu}$. 

\begin{thm}\label{sineX}
Let $M_{\lambda,\mu}:\N\times\N\to\C$ be an exponential on $X$.
The function $S:\N\times\N\to\C$ is an $M_{\lambda,\mu}$-sine function on $X$ if and only if there are complex numbers $a,b$ such that 
\begin{equation}\label{Msine}
S(x,y)=aT_x'(\lambda)T_y(\mu)+bT_x(\lambda)T_y'(\mu)
\end{equation}
whenever $x,y$ are in $X$.
\end{thm}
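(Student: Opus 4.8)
The plan is to exploit that $X$ is the product of two one-variable Chebyshev hypergroups and that the exponential factors as $M_{\lambda,\mu}(x,y)=T_x(\lambda)T_y(\mu)$, which reduces everything to the one-variable theory together with the single identity $\delta_{k,0}*\delta_{0,l}=\delta_{k,l}$. For the sufficiency direction I would differentiate the exponential relation in its parameters. By Theorem \ref{expX}, for every $(\lambda,\mu)\in\C^2$ we have $M_{\lambda,\mu}(\delta_{k,l}*\delta_{m,n})=M_{\lambda,\mu}(\delta_{k,l})M_{\lambda,\mu}(\delta_{m,n})$; spelled out through \eqref{ConvCh2} this is the \emph{polynomial} identity
$$
\tfrac14\big[T_{|k-m|}(\lambda)T_{|l-n|}(\mu)+T_{|k-m|}(\lambda)T_{l+n}(\mu)+T_{k+m}(\lambda)T_{|l-n|}(\mu)+T_{k+m}(\lambda)T_{l+n}(\mu)\big]=T_k(\lambda)T_l(\mu)\,T_m(\lambda)T_n(\mu)
$$
in the variables $\lambda,\mu$. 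Differentiating it with respect to $\lambda$ (keeping $\mu$ fixed) and writing $A(x,y)=T_x'(\lambda)T_y(\mu)$, the right-hand side becomes $A(k,l)M_{\lambda,\mu}(m,n)+A(m,n)M_{\lambda,\mu}(k,l)$ by the product rule, while the left-hand side is $A(\delta_{k,l}*\delta_{m,n})$; this is exactly the $M_{\lambda,\mu}$-sine equation for $A$. Differentiating with respect to $\mu$ instead shows that $B(x,y)=T_x(\lambda)T_y'(\mu)$ is an $M_{\lambda,\mu}$-sine function. Since the sine equation is linear in the unknown, every combination $aA+bB$ is a sine function, which is the form \eqref{Msine}.

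For the necessity direction, let $S$ be an arbitrary $M_{\lambda,\mu}$-sine function. Putting $l=n=0$ in the sine equation and using $\delta_{k,0}*\delta_{m,0}=\tfrac12(\delta_{|k-m|,0}+\delta_{k+m,0})$ gives
$$
\tfrac12\big[S(|k-m|,0)+S(k+m,0)\big]=S(k,0)T_m(\lambda)+S(m,0)T_k(\lambda),
$$
so $k\mapsto S(k,0)$ is a sine function with respect to the exponential $k\mapsto T_k(\lambda)$ of the one-variable Chebyshev hypergroup. Here I would invoke the one-variable characterization: taking $m=1$ turns the last equation into the three-term recursion $\tfrac12[S(k-1,0)+S(k+1,0)]=\lambda S(k,0)+S(1,0)T_k(\lambda)$, which together with $S(0,0)=0$ (read off from $\delta_{0,0}*\delta_{0,0}=\delta_{0,0}$) determines $S(\cdot,0)$ from the single value $S(1,0)$. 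Since $k\mapsto T_k'(\lambda)$ solves the same recursion with value $1$ at $k=1$ (differentiate $\lambda T_k(\lambda)=\tfrac12[T_{k-1}(\lambda)+T_{k+1}(\lambda)]$), there is a constant $a$ with $S(k,0)=aT_k'(\lambda)$. Symmetrically, $S(0,l)=bT_l'(\mu)$ for some constant $b$.

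It remains to recover $S$ off the axes, and this is where the product structure collapses the problem to one line. Because $0$ is the identity of each factor, $\delta_{k,0}*\delta_{0,l}=\delta_{k,l}$, so applying the sine equation to the pair $(k,0),(0,l)$ yields
$$
S(k,l)=S(k,0)M_{\lambda,\mu}(0,l)+S(0,l)M_{\lambda,\mu}(k,0)=aT_k'(\lambda)T_l(\mu)+bT_k(\lambda)T_l'(\mu),
$$
which is precisely \eqref{Msine}. I expect the only genuine work to be the one-variable base case, namely identifying the Chebyshev sine functions as the scalar multiples of $k\mapsto T_k'(\lambda)$; once the axis values are known, the identity $\delta_{k,0}*\delta_{0,l}=\delta_{k,l}$ finishes the proof immediately. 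It is worth noting that the absolute values which obstruct the general sine equation disappear conveniently under each of the three substitutions used, $l=n=0$, $k=m=0$, and the pair $(k,0),(0,l)$.
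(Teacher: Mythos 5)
Your proof is correct, and its skeleton is the same as the paper's: restrict the sine equation to the two coordinate axes (substitutions $l=n=0$ and $k=m=0$), identify each axis restriction as a sine function on the one-variable Chebyshev hypergroup, and then recombine via the sine equation applied to the pair $(k,0),(0,l)$, which is exactly the paper's identity $S(x,v)=S(x,0)T_v(\mu)+S(0,v)T_x(\lambda)$. The two places where you diverge are places where the paper outsources work: for the one-variable step the paper cites Theorem 2.5 of Sz\'ekelyhidi's book to conclude $S(k,0)=aT_k'(\lambda)$, whereas you prove it on the spot by noting that the sine equation with $m=1$ is a three-term recursion pinned down by $S(0,0)=0$ and $S(1,0)$, and that $k\mapsto T_k'(\lambda)$ satisfies the same recursion (differentiate the Chebyshev recursion) with the same initial data; and for sufficiency the paper says ``easy to check,'' whereas you obtain it cleanly by differentiating the polynomial identity expressing multiplicativity of $M_{\lambda,\mu}$ with respect to $\lambda$ and $\mu$. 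Both additions are sound, so your write-up is a self-contained version of the paper's argument rather than a different route.
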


\begin{proof}
The function $S$ given in \eqref{sineX} is an $M_{\lambda,\mu}$-sine function on $X$, as it is easy to check by simple calculation. For the converse we assume that \hbox{$S:\N\times\N\to\C$} is an $M_{\lambda,\mu}$-sine function on $X$. Then we have for each $x,y,u,v$ in $\N$:
\begin{equation}\label{S0}
S\bigl((x,y)*(u,v)\bigr)=S(x,y)M_{\lambda,\mu}(u,v)+S(u,v)M_{\lambda,\mu}(x,y).
\end{equation}
Using the form of the exponentials on $X$ given in the previous Theorem \ref{expX} and substituting $y=u=0$ we get
\begin{equation}\label{S1}
S(x,v)=S(x,0)T_v(\mu)+S(0,v)T_x(\lambda).
\end{equation}
Now we substitute $y=v=0$ in \eqref{S0} to get
\begin{equation}\label{S2}
\frac{1}{2}[S(|x-u|,0)+S(x+u,0)]=S(x,0)T_u(\lambda)+S(u,0)T_x(\lambda).
\end{equation}
Using the fact that the function $M_{\lambda}:x\mapsto T_x(\lambda)$ is an exponential on the Chebyshev hypergroup (see \cite{MR2978690}), we can write equation \eqref{S2} in the form
\begin{equation}\label{S3}
	S(x*u,0)=S(x,0)M_{\lambda}(u)+S(u,0)M_{\lambda}(x),
\end{equation}
and this means that the function $x\mapsto S(x,0)$ is an $M_{\lambda}$-sine function on the Chebyshev hypergroup. It follows from \cite[Theorem 2.5.]{MR2978690}, that
$$
S(x,0)=a \frac{d}{d\lambda}M_{\lambda}(x)=a T'_x(\lambda)
$$
with some complex number $a$. Similarly, we have that
$$
S(0,v)=b \frac{d}{d\mu}M_{\mu}(x)=a T'_v(\mu).
$$
Finally, substitution into \eqref{S1} gives the statement.
\end{proof}

It follows that the linear space of all $M_{\lambda,\mu}$-sine functions on $X$ is at most two dimensional, it is spanned by the two functions $(x,y)\mapsto T_x'(\lambda)T_y(\mu)$ and $(x,y)\mapsto T_x(\lambda)T_y'(\mu)$. On the other hand, these two functions are linearly independent. Indeed, assume that
$$
\alpha T_x'(\lambda)T_y(\mu)+\beta T_x(\lambda)T_y'(\mu)=0
$$
holds for some complex numbers $\alpha, \beta$ and for all $x,y$ in $X$. Substituting $x=0, y=1$ and using $T_0(z)=1$ and $T_1(z)=z$ we have $T_0'(\lambda)=0$ and $T_1'(\mu)=1$, hence we get $\beta=0$. Interchanging the role of $x$ and $y$, we obtain $\alpha=0$, hence the two functions $(x,y)\mapsto T_x'(\lambda)T_y(\mu)$ and $(x,y)\mapsto T_x(\lambda)T_y'(\mu)$ are linearly independent -- they form a basis of the linear space of all $M_{\lambda,\mu}$-sine functions. We consider the variety $\tau(\varphi)$ of an $M_{\lambda,\mu}$-exponential monomial, which is not a linear combination of moment functions in $\tau(\varphi)$ associated with the exponential $M_{\lambda,\mu}$. From \cite[Theorem 2.1]{FecGseSze20AEQ} it follows, that the two functions $(x,y)\mapsto T_x'(\lambda)T_y(\mu)$ and $(x,y)\mapsto T_x(\lambda)T_y'(\mu)$ must belong to $\tau(\varphi)$, hence the linear space of all $M_{\lambda,\mu}$-sine functions in $\tau(\varphi)$ is two dimensional. We will show that still $\tau(\varphi)$ is generated 
by moment functions. This would verify that the sufficient condition given in \cite[Theorem 2.1]{FecGseSze20AEQ} for that the variety of an exponential monomial is generated by moment functions is not necessary. In fact, we will show that, in general, on every polynomial hypergroup, every finite dimensional variety is spanned by moment functions. In particular, the variety of each exponential monomial is spanned by moment functions -- it follows that every exponential monomial is a linear combination of  moment functions. On the other hand, it is easy to see that on polynomial hyeprgroups in more than one variable there are exponential monomials whose variety contains more than one linearly independent sine function.
\vskip.2cm

We note that the formula for $S$ in equation \eqref{Msine} can be written in the following form
$$
S(x,y)=a \partial_{\lambda} M_{\lambda,\mu}(x,y)+b \partial_{\mu} M_{\lambda,\mu}(x,y),
$$
which is a special case of the moment functions appear in Theorem \ref{poldifmom} in the next section. 

\section{A class of moment functions on polynomial hypergroups}

Let $X$ be a polynomial hypergroup in $d$ variables associated with the family of polynomials $\{Q_x:\, x\in X\}$, and we always assume that $Q_o=1$, where $o$ is the identity of $X$. We introduce a special class of exponential monomials \hbox{on $X$.} Let $P$ be a polynomial in $d$ variables: 
$$
P(\xi)=\sum_{|\alpha|\leq N} a_{\alpha} \xi^{\alpha},
$$
where we use multi-index notation. This means that $\xi=(\xi_1,\xi_2,\dots \xi_d)$ is in $\C^d$, $\alpha=(\alpha_1,\alpha_2,\dots,\alpha_d)$ is in $\N^d$, $a_{\alpha}$ is a complex number, and
$$
|\alpha|=\alpha_1+\alpha_2+\dots+\alpha_d,\enskip \xi^{\alpha}=\xi_1^{\alpha_1} \xi_2^{\alpha_2}\cdots \xi_d^{\alpha_d}.
$$
Then we write
$$
P(\partial)=\sum_{|\alpha|\leq N} a_{\alpha} \partial^{\alpha},
$$
where $\partial=(\partial_1,\partial_2,\dots,\partial_d)$ with the obvious meaning of the partial differential operators $\partial_i$. The differential operator $P(\partial)$ is defined on the space of polynomials in $d$ variables in the usual way: given the polynomial $Q$ in the polynomial ring $\C[z_1,z_2,\dots,z_d]$ then
$$
[P(\partial)Q](\xi)=\sum_{|\alpha|\leq N} a_{\alpha} [\partial_1^{\alpha_1} \partial_2^{\alpha_2}\cdots \partial_d^{\alpha_d}Q](\xi)
$$ 
for each $\xi$ in $\C^d$. If $P$ is not identically zero, then we always assume that $\sum_{|\alpha|=N} |a_{\alpha}|>0$, that is, the the total degree of the polynomial $P$ is $N$ -- in this case we say that the differential operator $P(\partial)$ is of order $N$.
\vskip.2cm

The following result shows that the functions $x\mapsto P(\partial)Q_x$ are linear combinations of moment functions on $X$.

\begin{thm}\label{poldifmom}
Let $X$ be a polynomial hypergroup in $d$ variables associated with the family of polynomials $\{Q_x:\,x\in X\}$. Then, for each multi-index $\alpha$ in $\N^d$, and for every $\lambda$ in $\C^d$, the function $x\mapsto [\partial^{\alpha}Q_x](\lambda)$ is a moment function sequence of rank $d$ associated with the exponential $x\mapsto Q_x$. 
\end{thm}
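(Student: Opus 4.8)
The plan is to exploit the fact that the linearization formula \eqref{linform} is nothing other than the multiplicativity of the family of functions $x\mapsto Q_x(\xi)$, and that differentiating this identity in the spectral variable $\xi$ converts multiplicativity into the moment recursion \eqref{Eq3} through the Leibniz rule. Fix $\lambda\in\C^d$ and write $f_\alpha(x)=[\partial^\alpha Q_x](\lambda)$ for each $\alpha\in\N^d$, where $\partial^\alpha$ acts on the polynomial $Q_x$ as a function of its argument $\xi\in\C^d$.

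First I would record the linearization formula as a genuine identity of polynomials in the free variable $\xi$. Since $\delta_x*\delta_y=\sum_{w\in X}c(x,y,w)\delta_w$ is a finitely supported measure, for every $\xi\in\C^d$ we have $Q_x(\xi)\,Q_y(\xi)=\sum_{w\in X}c(x,y,w)\,Q_w(\xi)$, a \emph{finite} sum; evaluating at a single $\xi$ this also shows that $x\mapsto Q_x(\xi)$ is an exponential on $X$. Next I would apply the differential operator $\partial^\alpha$ in $\xi$ to both sides of this identity and evaluate at $\xi=\lambda$. On the left, the multivariate Leibniz rule $\partial^\alpha(PQ)=\sum_{\beta\le\alpha}\binom{\alpha}{\beta}(\partial^\beta P)(\partial^{\alpha-\beta}Q)$ gives, after evaluation at $\lambda$, exactly $\sum_{\beta\le\alpha}\binom{\alpha}{\beta}f_\beta(x)\,f_{\alpha-\beta}(y)$. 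On the right, differentiating termwise (legitimate precisely because the sum is finite and each $Q_w$ is a polynomial) and evaluating at $\lambda$ yields $\sum_{w\in X}c(x,y,w)\,f_\alpha(w)=\int_X f_\alpha\,d(\delta_x*\delta_y)=f_\alpha(x*y)$. Equating the two sides produces precisely the defining relation \eqref{Eq3} of a moment function sequence of rank $d$.

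Finally I would check the normalization: the index $\alpha=(0,\dots,0)$ gives $f_{(0,\dots,0)}(x)=Q_x(\lambda)$, which is the generating exponential $x\mapsto Q_x(\lambda)$, and it is not identically zero since $f_{(0,\dots,0)}(o)=Q_o(\lambda)=1$. Thus $(f_\alpha)_{\alpha\in\N^d}$ is a moment function sequence generated by this exponential, as claimed. I do not expect a serious obstacle; the only point demanding care is the justification for differentiating the linearization formula, and this is harmless exactly because \eqref{linform} is a polynomial identity with a finite right-hand side. The conceptual content is simply that $\xi\mapsto(x\mapsto Q_x(\xi))$ is an analytic family of exponentials whose Taylor coefficients at $\lambda$ assemble into a moment sequence --- the hypergroup analogue of the classical fact that the $\xi$-derivatives of $e^{\langle\xi,\cdot\rangle}$ recover the moment functions $x\mapsto x^\alpha e^{\langle\xi,x\rangle}$.
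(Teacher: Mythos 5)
Your proof is correct and is essentially the paper's own argument: both differentiate the linearization formula $Q_xQ_y=\sum_{w}c(x,y,w)Q_w$ at $\xi=\lambda$, using finiteness of the sum to justify termwise differentiation and the Leibniz rule to produce the moment recursion \eqref{Eq3}. The only (welcome) addition is your explicit check of the normalization $f_{(0,\dots,0)}=Q_{\cdot}(\lambda)\neq 0$, which the paper leaves implicit.
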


\begin{proof}
Let $c((x,y,t)$ denote the linearization coefficients of the polynomial hypergroup $X$, that is
$$
Q_x \cdot Q_y=\sum_{t\in X} c(x,y,t)Q_t
$$
for each $x,y$ in $X$. Further let
$f(x)=[\partial^{\alpha}Q_x](\lambda)$ whenever $x$ in $X$: then we have
$$
f(x*y)=\int_X f(t)\,d(\delta_x*\delta_y)(t)=\int_X [\partial^{\alpha}Q_t](\lambda) \,d(\delta_x*\delta_y)(t)=
$$
$$
\partial^{\alpha}[\int_X  Q_t(\lambda)\,d(\delta_x*\delta_y)(t)]=\partial^{\alpha}[\sum_{t\in X} c(x,y,t)Q_t(\lambda)]=
$$
$$
[\partial^{\alpha}(Q_x\cdot Q_y)](\lambda)=\sum_{\beta\leq \alpha} \binom{\alpha}{\beta} [\partial^{\beta}Q_x](\lambda)\cdot [\partial^{\alpha-\beta}Q_y](\lambda),
$$
which proves the statement.
\end{proof}

The moment functions of the form $x\mapsto \partial Q_x(\lambda)$ play an important role in our work: our purpose is to show that in any variety the linear combinations of these moment functions span a dense subspace. In other words, spectral synthesis holds on any polynomial hypergroup even when we restrict ourselves to exponential polynomials merely of the form $x\mapsto P(\partial)Q_x(\lambda)$. This will be proved in the subsequent paragraphs.

\section{The Fourier--Laplace transform}

In what follows we shall use the Fourier--Laplace transform on commutative hypergroups. Here we shortly summarize the basic concepts and results. Let $X$ be a commutative hypergroup and let $\mathcal C(X)$ denote the linear space of all complex valued continuous functions on $X$. Equipped with the uniform convergence on compact sets  $\mathcal C(X)$ is a locally convex topological vector space. If, for instance,  $X$ is discrete, then $\mathcal C(X)$ is the linear space of all complex valued functions on $X$ and the topology on $\mathcal C(X)$ is the topology of pointwise convergence. 
\vskip.2cm

The topological dual space of $\mathcal C(X)$ can be identified with the space of all compactly supported complex Borel measures on $X$, denoted by $\mathcal M_c(X)$. The identification depends on the Riesz Representation Theorem (see e.g. \cite[Theorem 6.19]{MR924157}): every continuous linear functional $\Lambda$ on $\mathcal C(X)$ can be represented in the form
$$
\Lambda(f)=\int_X f\,d\mu
$$ 
whenever $f$ is in $\mathcal C(X)$, where $\mu$ is a uniquely determined measure in $\mathcal M_c(X)$, depending on $\Lambda$ only. Clearly, if $X$ is discrete, then $\mathcal M_c(X)$ is the linear space of all finitely supported complex valued functions on $X$. For each measure $\mu$ in $\mathcal M_c(X)$ we define the measure $\widecheck{\mu}$ by
$$
\int_X f\,d\widecheck{\mu}=\int_X \widecheck{f}\,d\mu
$$
whenever $f$ is in $\mathcal C(X)$.
\vskip.2cm

The convolution in $X$ induces an algebra structure on $\mathcal M_c(X)$ in the following manner. Given two measures $\mu,\nu$ in $\mathcal M_c(X)$ their convolution is introduced as the measure $\mu*\nu$ defined on $\mathcal C(X)$ by the formula
$$
\int_X f\,d(\mu*\nu)=\int_X \int_X f(x*y)\,d\mu(x)\,d\nu(y),
$$
whenever $f$ is in $\mathcal C(X)$. We recall that $f(x*y)$ stands for the integral 
$$
\int_X f\,d(\delta_x*\delta_y)=\int_X f(t)\,d(\delta_x*\delta_y)(t),
$$
hence the above formula can be written in the more detailed form as
$$
\int_X f\,d(\mu*\nu)=\int_X \int_X \int_X f(t)\,d(\delta_x*\delta_y)(t)\,d\mu(x)\,d\nu(y),
$$
whenever $f$ is in $\mathcal C(X)$. As all the measures $\mu,\nu,\mu*\nu$ are compactly supported, this integral exists for each continuous function $f$.
\vskip.2cm

It is easy to check that the linear space $\mathcal M_c(X)$ is a commutative algebra with the convolution of measures. We call $\mathcal M_c(X)$ the measure algebra of the hypergroup $X$. If $X$ is discrete, then it is usually called the hypergroup algebra of $X$ -- imitating the terminology used in group theory.
\vskip.2cm

The Fourier--Laplace transform on $\mathcal M_c(X)$ is defined as follows: for each measure $\mu$ in $\mathcal M_c(X)$ and for each exponential $m$ on $X$ we let 
$$
\widehat{\mu}(m)=\int_X \widecheck{m}\,d\mu.
$$
Then $\widehat{\mu}:\mathcal E(X)\to\C$ is a complex valued function defined on the set $\mathcal E(X)$ of all exponentials \hbox{on $X$.} Clearly, the mapping $\mu\mapsto \widehat{\mu}$ is linear, and it is also an algebra homomorphism, as it is shown by the following simple calculation:
$$
(\mu*\nu)\,\widehat{}\,(m)=\int_X \widecheck{m}\,d(\mu*\nu)=\int_X\int_X \widecheck{m}(x*y)\,d\mu(x)\,d\nu(y)=
$$
$$
=\int_X\int_X \widecheck{m}(x)\widecheck{m}(y)\,d\mu(x)\,d\nu(y)=
\int_X \widecheck{m}(x)\,d\mu(x) \cdot \int_X \widecheck{m}(y)\,d\nu(y)=\widehat{\mu}(m)\cdot \widehat{\nu}(m),
$$
which holds for each $\mu,\nu$ in $\mathcal M_c(X)$ and for every exponential $m$ on $X$.
\vskip.2cm

From this property it follows that the set of all Fourier--Laplace transforms on $\mathcal E(X)$ form an algebra, which is called the Fourier algebra of $X$, denoted by $\mathcal A(X)$. In fact, by one of the most important properties of the Fourier--Laplace transform it follows, that the Fourier algebra is isomorphic to the measure algebra, which is expressed by the following theorem (see e.g. \cite[2.2.4 Theorem]{MR1312826}).

\begin{thm}\label{inj}
Let $X$ be a commutative hypergroup and let $\mu$ be in the measure algebra $\mathcal M_c(X)$. If $\widehat{\mu}(m)=0$ for each exponential $m$ on $X$, then $\mu=0$. 
\end{thm}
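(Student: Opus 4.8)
The plan is to use the fact, just established, that the Fourier--Laplace transform is an algebra homomorphism, so that every exponential $m$ produces a multiplicative functional $\mu\mapsto\widehat\mu(m)$ on $\mathcal M_c(X)$. Injectivity is then exactly the assertion that these functionals separate the points of the measure algebra, i.e. that no nonzero $\mu$ can be annihilated by every exponential. Everything therefore reduces to producing, for a given nonzero $\mu$, at least one exponential $m$ with $\widehat\mu(m)\neq 0$; the whole difficulty lies in knowing that a hypergroup carries \emph{enough} exponentials.

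In the cases that matter for this paper this abundance is immediate. Let $X$ be the polynomial hypergroup associated with $\{Q_x:x\in X\}$. Since $X$ is discrete, each $\mu$ in $\mathcal M_c(X)$ is finitely supported, $\mu=\sum_x c_x\delta_x$ with only finitely many $c_x\neq0$. For every $z$ in $\C^d$ the function $m_z(x)=Q_x(z)$ is an exponential: the linearization formula \eqref{linform} gives $m_z(x)m_z(y)=\sum_t c(x,y,t)Q_t(z)=\int_X Q_t(z)\,d(\delta_x*\delta_y)(t)=m_z(x*y)$, while $m_z(o)=Q_o(z)=1$ by (P2). Evaluating the hypothesis on this family, $0=\widehat\mu(m_z)=\int_X\widecheck{m_z}\,d\mu=\sum_x c_x\,m_z(\widecheck x)=\sum_x c_x\,Q_{\widecheck x}(z)$ for every $z$ in $\C^d$. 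Because $x\mapsto\widecheck x$ is a bijection and the $Q_x$ form a basis of the polynomial ring by (P1), the finitely many polynomials $Q_{\widecheck x}$ appearing here are linearly independent, and a polynomial identity holding on all of $\C^d$ forces each coefficient to vanish. Hence $c_x=0$ for every $x$ and $\mu=0$, which settles every instance to which the theorem is later applied.

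For a general commutative hypergroup neither reduction survives: $\mu$ need not be finitely supported and the exponentials need not be polynomials, so the linear--independence argument disappears, and this is where I expect the genuine obstacle. The natural continuation is to select an analytic family $z\mapsto m_z$ of exponentials, observe that $z\mapsto\widehat\mu(m_z)$ is then holomorphic since $\mu$ is compactly supported, and deduce from its vanishing that all its parameter derivatives vanish; these derivatives are integrals against $\mu$ of the moment functions obtained by differentiating $m_z$ (cf.\ Theorem \ref{poldifmom}). One is thereby reduced to showing that the moment functions, and more generally the exponential polynomials, separate compactly supported measures on the compact support of $\mu$ --- a density statement. The main obstacle is exactly this density: on a hypergroup the pointwise product of two exponentials is usually not an exponential, so the classical Stone--Weierstrass argument is blocked, and one must fall back on the structural theory of the Fourier--Laplace transform. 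For the statement at this level of generality I would therefore defer to \cite[2.2.4 Theorem]{MR1312826}, the elementary argument above being sufficient for the polynomial hypergroups studied here.
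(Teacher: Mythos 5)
Your proposal is correct, and it actually does more work than the paper does: for this theorem the paper supplies no argument at all, simply deferring to \cite[2.2.4 Theorem]{MR1312826}, which is exactly the fallback you invoke for the general commutative case. What you add is a complete, self-contained proof in the polynomial-hypergroup setting: since $X$ is discrete, $\mu=\sum_x c_x\delta_x$ is finitely supported, the functions $m_z(x)=Q_x(z)$ are exponentials by the linearization formula, and the vanishing of $\sum_x c_x Q_{\widecheck{x}}(z)$ for all $z$ forces $c_x=0$ because the $Q_x$ are linearly independent by (P1) and the involution is a bijection. That argument is sound, and it covers every instance in which the paper actually applies the theorem; it is also essentially the same observation that the paper later uses to identify the Fourier algebra $\mathcal A(X)$ with the polynomial ring $\C[z_1,\dots,z_d]$. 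What the citation buys, and your elementary argument does not, is full generality: for a non-discrete commutative hypergroup, where measures need not be finitely supported and exponentials need not come in a polynomial family, your linear-independence mechanism is unavailable, as you correctly acknowledge. Two small points: the normalization $m_z(o)=Q_o(z)=1$ is not a consequence of (P2) (which concerns evaluation at $(1,1,\dots,1)$) but is the standing assumption $Q_o=1$ that the paper makes explicitly; and your closing sketch for the general case (analytic families of exponentials, density of exponential polynomials) is honest speculation rather than proof, but since you defer to the same reference the paper cites, this leaves you no worse off than the paper itself.
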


For our purposes it will be necessary to describe the Fourier algebra of polynomial hypergroups. This will be done in the subsequent paragraphs.
\vskip.2cm

Let $X$ be a polynomial hypergroup in $d$ variables generated by the family $\{Q_x:\,x\in X\}$ of polynomials. We may assume that $Q_o=1$, where $o$ is the identity of $X$. It is known that the function $m:X\to\C$ is an exponential on $X$ if and only if there exists a $\lambda$ in $\C^d$ such that 
$$
m(x)=Q_x(\lambda)
$$
holds for each $x$ in $X$. As $\lambda$ is obviously uniquely determined by $m$, hence the set $\mathcal E(X)$ of all exponentials on $X$ can be identified with $\C^d$. Consequently, the Fourier--Laplace transform of each measure in $\mathcal M_c(X)$ is a complex valued function on $\C^d$:
$$
\widehat{\mu}(\lambda)=\int_X Q_x(\lambda)\,d\mu(x).
$$
In fact, the integral is a finite sum, which implies that $\widehat{\mu}$ is a complex polynomial in $d$ variables. Conversely, let $P$ be any polynomial in the polynomial ring $\C[z_1,z_2,\dots,z_d]$. Then, by definition, $P$ has a representation of the form
$$
P(z)=\sum_{k=1}^n c_k Q_{x_k}(z)
$$
with some elements $x_k$ in $X$ and complex numbers $c_k$ for $k=1,2,\dots,n$. We have
$$
\widehat{\delta}_{x_k}(z)=\int_X Q_x(z)\,d\delta_{x_k}(x)=Q_{x_k}(z),
$$
hence 
$$
P=\sum_{k=1}^n c_k Q_{x_k}=\sum_{k=1}^n c_k \widehat{\delta}_{x_k}=\bigl(\sum_{k=1}^n c_k \delta_{x_k}\bigr)\,\widehat{}.
$$
Here $\mu=\sum_{k=1}^n c_k \delta_{x_k}$ is in $\mathcal M_c(X)$, hence we have proved that each polynomial in $\C[z_1,z_2,\dots,z_d]$ is in $\mathcal A(X)$. In other words, the Fourier algebra of each polynomial hypergroup in $d$ variables is the polynomial ring in $d$ variables.

\section{Spectral synthesis via moment functions}

We shall use the following basic result (see \cite{MR1925796}, \cite[Theorem 6.9]{MR2978690}).

\begin{thm} (Ehrenpreis--Palamodov Theorem)\label{EPT}
Let $I$ be a primary ideal in the polynomial ring $\C[z_1,z_2,\dots,z_d]$, and let $V$ denote the set of all common zeros of all polynomials in $I$. Then there exists a positive integer $t$ such that, for $i=1,2,\dots,t$  there exist differential operators with polynomial coefficients of the form
$$
A_i(z,\partial)=\sum_{j} c_{i,j} p_j(z_1,z_2,\dots,z_d) \partial_1^{j_1}\partial_2^{j_2}\cdots \partial_d^{j_d}
$$
for $i=1,2,\dots,t$ with the following property: a polynomial $p$ in $\C[z_1,z_2,\dots,z_d]$ lies in the ideal $I$ if and only if the result applying $A_i(z,\partial)$ to $f$ vanishes on $V$ for $i=1,2,\dots,t$.  
\end{thm}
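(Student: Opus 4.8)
The plan is to establish the existence of the \emph{Noetherian operators} $A_1,\dots,A_t$ by dualizing the finite-length structure of the quotient $\C[z_1,\dots,z_d]/I$, treating first the case where $V$ is a single point and then reducing to it by relativizing over the function field of $V$. Throughout write $R=\C[z_1,\dots,z_d]$ and $J=\sqrt{I}$; since $I$ is primary, $J$ is prime, $V=V(J)$, and by the Nullstellensatz the phrase ``$A_i(z,\partial)p$ vanishes on $V$'' means precisely $A_i(z,\partial)p\in J$. The forward compatibility (that $p\in I$ forces $A_ip\in J$) will hold automatically by construction, so the real content is \emph{completeness}: that membership in $I$ is detected by finitely many such differential tests.

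First I would treat the zero-dimensional case, where $J$ is a maximal ideal, so after a translation $V=\{0\}$ and $R/I$ is a finite-dimensional local $\C$-algebra. Here the tool is Macaulay's apolarity duality: pairing $R$ with $\C[\partial_1,\dots,\partial_d]$ via $\langle z^{\alpha},\partial^{\beta}\rangle=(\partial^{\beta}z^{\alpha})(0)$, the annihilator $I^{\perp}=\{D\in\C[\partial_1,\dots,\partial_d]: (Dp)(0)=0 \text{ for all } p\in I\}$ is a finite-dimensional subspace, closed under differentiation, with $\dim_{\C}I^{\perp}=\dim_{\C}R/I$. Because $I\supseteq J^{k}$ for some $k$ and the pairing is perfect on $R/J^{k}$, one has the double-annihilator identity $I=(I^{\perp})^{\perp}$; hence for any basis $A_1,\dots,A_t$ of $I^{\perp}$, read as constant-coefficient operators, $p\in I$ if and only if $(A_ip)(0)=0$ for all $i$. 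This is exactly the assertion of the theorem when $V$ is a point.

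For general $V$ of dimension $e$ the plan is a generic Noether normalization followed by base change. After a generic linear change of coordinates I would arrange that $\C[z_1,\dots,z_e]\hookrightarrow R/J$ is module-finite, localize at the multiplicative set $S=\C[z_1,\dots,z_e]\setminus\{0\}$, and pass to $L=\C(z_1,\dots,z_e)$. Over $L$ the fibre of $V$ above its generic point is a single point, so $I_L=I\cdot L[z_{e+1},\dots,z_d]$ is again primary but now with \emph{maximal} radical $J_L$, and $L[z_{e+1},\dots,z_d]/I_L$ is a finite-dimensional local $L$-algebra. Applying the zero-dimensional construction over the field $L$, in the transversal variables $z_{e+1},\dots,z_d$ only, produces operators $A_i=A_i(z;\partial_{e+1},\dots,\partial_d)$ with coefficients in $L$ that detect membership in $I_L$ modulo $J_L$. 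Multiplying each $A_i$ by a suitable element of $S$ clears denominators and makes the coefficients polynomial; since such a multiplier is a unit modulo $J$, the vanishing-on-$V$ condition is unaffected.

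It remains to descend from $L[z_{e+1},\dots,z_d]$ back to $R$, and this is where I expect the main difficulty. The key facts are the saturation identities $I_L\cap R=I$ and $J_L\cap R=J$, valid because no element of $S$ is a zero-divisor modulo the $J$-primary ideal $I$ (nor modulo $J$); granting these, for $p\in R$ one gets $p\in I\iff p\in I_L\iff A_ip\in J_L \text{ for all } i\iff A_ip\in J \text{ for all } i$, which is the theorem. The delicate point, and the heart of the matter, is that a single finite family of polynomial-coefficient operators, constructed at the generic point of $V$, continues to characterize $I$ uniformly along all of $V$; this is guaranteed by the flatness of the localization together with the genericity of the coordinates, and it is precisely here that the Noetherian (finiteness) hypothesis on $I$ is indispensable. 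I would note finally that the same statement admits a purely analytic proof via the Ehrenpreis--Palamodov fundamental principle, representing the solutions of the associated constant-coefficient system, but the algebraic inverse-systems route above seems the most economical.
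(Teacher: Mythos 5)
The first thing to note is that the paper does not prove Theorem \ref{EPT} at all: it imports the result with references to \cite{MR1925796} and \cite[Theorem 6.9]{MR2978690}, so your argument has to be judged against the standard proofs in the literature (Palamodov, H\"ormander, Oberst, and the recent algebraic treatments of Noetherian operators) rather than against anything in the paper. Its skeleton does match those proofs: Macaulay inverse-system duality in dimension zero, then Noether normalization, localization at the generic point of $V$, and descent. The commutative algebra surrounding your reduction is also correct: $S\cap J=\emptyset$, so $J_L$ is maximal and $I_L$ is $J_L$-primary; $L[z_{e+1},\dots,z_d]/I_L$ is a finite-dimensional local $L$-algebra; the saturations $I_L\cap R=I$ and $J_L\cap R=J$ follow from primarity; and clearing denominators is harmless because the multiplier lies in $S$ and $J$ is prime (it is a nonzerodivisor modulo $J$, not a unit as you wrote, but that is all that is needed).

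The genuine gap is the sentence ``applying the zero-dimensional construction over the field $L$.'' Your zero-dimensional case is proved only for a $\C$-rational point: you translate $V$ to the origin and use the pairing $\langle z^{\alpha},\partial^{\beta}\rangle=(\partial^{\beta}z^{\alpha})(0)$, i.e.\ evaluation at that point. Over $L=\C(z_1,\dots,z_e)$ no such point exists: $J_L$ is a maximal ideal of $L[z_{e+1},\dots,z_d]$ whose residue field is $\mathrm{Frac}(R/J)$, a finite extension of $L$ of degree equal to the degree of the branched cover $V\to\C^e$, hence $>1$ in general. The generic fibre is a single \emph{closed} point but not an $L$-rational one, so there is no translation bringing $J_L$ to $(z_{e+1},\dots,z_d)$ and nothing to evaluate at; your pairing argument simply does not apply. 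This is not a formality that ``flatness of the localization together with the genericity of the coordinates'' can absorb: it is exactly the step where separability (characteristic zero) must enter, and your reduction never uses it. Indeed, run your scheme on $R=\mathbb{F}_p[t,x]$, $J=(x^p-t)$, $I=J^2$, with Noether normalization $\mathbb{F}_p[t]\hookrightarrow R/J$: over $L=\mathbb{F}_p(t)$ the closed point is the non-rational, inseparable one cut out by $x^p-t$, and since $\partial_x(x^p-t)=0$, every operator $A=\sum_j a_j\,\partial_x^{j}$ with $a_j$ in $L[x]$ maps $J_L$ into $J_L$; hence $x^p-t$ passes every transversal-derivative test that $I_L$ passes, yet $x^p-t\notin I_L$, so no family of operators of the kind your construction produces can exist there. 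In characteristic zero the needed statement is true, but proving it is the actual content of the theorem: one must either complete and invoke the Cohen structure theorem $\widehat{(S^{-1}R)_{J_L}}\cong \kappa[[y_1,\dots,y_{d-e}]]$ with $\kappa=\mathrm{Frac}(R/J)$ (this is where characteristic zero is used, to lift the residue field into the completion), run Macaulay duality over $\kappa$, and then descend the coefficients to polynomials; or base-change to $\overline{L}$, where $I_L$ splits into finitely many conjugate primary ideals supported at $\overline{L}$-rational points, apply your rational-point lemma to each, and carry out a Galois descent on the resulting operators. Without one of these arguments the middle of the proof is missing, and it is the part where the real work of Ehrenpreis--Palamodov lives. (A side remark: your closing suggestion to deduce the theorem from the analytic fundamental principle is circular in the usual developments, since the fundamental principle is itself proved from the existence of Noetherian operators.)
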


The following result can be found in \cite[Theorem 6.10]{MR2978690}, but for the sake of completeness we present it together with its proof. 

\begin{thm}\label{momspec}
Let $X$ be a polynomial hypergroup in $d$ variables, and let $V$ be a proper variety on $X$. Then the functions of the form $x\mapsto P(\partial)Q_x(\lambda)$ in $V$, where $P$ is a polynomial in $d$ variables, and $\lambda$ is in $\C^d$ such that the exponential $x\mapsto Q_x(\lambda)$ is in $V$, span a dense subspace in $V$.
\end{thm}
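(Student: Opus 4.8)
The plan is to exploit the annihilator duality between varieties in $\mathcal{C}(X)$ and ideals in the measure algebra $\mathcal{M}_c(X)$, transported by the Fourier--Laplace transform to the polynomial ring $\C[z_1,\dots,z_d]$. Since $X$ is discrete, $\mathcal{C}(X)$ and $\mathcal{M}_c(X)$ form a dual pair under $\langle f,\mu\rangle=\int_X f\,d\mu$, and a closed subspace coincides with the pre-annihilator of its annihilator. Writing $W$ for the linear span of the functions $x\mapsto P(\partial)Q_x(\lambda)$ that lie in $V$, the inclusion $W\subseteq V$ gives $\Ann V\subseteq\Ann W$, and $\overline{W}=V$ is equivalent to the reverse inclusion $\Ann W\subseteq\Ann V$. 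Thus the whole theorem reduces to the assertion that every $\mu\in\mathcal{M}_c(X)$ annihilating all admissible moment functions in $V$ must annihilate $V$ itself. The annihilator $\Ann V$ is a (closed) ideal because $V$ is translation invariant, so, setting $\widehat{\mu}(\lambda)=\int_X Q_x(\lambda)\,d\mu(x)$ and using the identification of the Fourier algebra of $X$ with $\C[z_1,\dots,z_d]$ together with the injectivity in Theorem \ref{inj}, the map $\mu\mapsto\widehat{\mu}$ is an algebra isomorphism of $\mathcal{M}_c(X)$ onto $\C[z_1,\dots,z_d]$ carrying $\Ann V$ onto an ideal $J$; since $V$ is proper, $J$ is a nonzero ideal.

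The bridge between the two sides is the elementary identity, obtained by differentiating the finite sum under the integral,
\[
\int_X [P(\partial)Q_x](\lambda)\,d\mu(x)=[P(\partial)\widehat{\mu}](\lambda),
\]
valid for every constant-coefficient operator $P(\partial)$ and every $\lambda\in\C^d$. Combining this with $V={}^{\perp}(\Ann V)$, I read off two facts. First, writing $Z(I)$ for the common zero set of an ideal $I$, the exponential $x\mapsto Q_x(\lambda)$ lies in $V$ precisely when $\lambda\in Z(J)$. Second, the function $g\colon x\mapsto P(\partial)Q_x(\lambda)$ lies in $V$ precisely when $[P(\partial)\widehat{\nu}](\lambda)=0$ for every $\nu\in\Ann V$, that is, for every polynomial in $J$.

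To produce a witnessing moment function for a given $\mu\notin\Ann V$, I argue by contradiction through the Noetherian decomposition. Put $q=\widehat{\mu}$ and suppose $q\notin J$. By the Lasker--Noether theorem $J=\bigcap_k I_k$ is an intersection of primary ideals, so $q\notin I_k$ for some $k$. Applying the Ehrenpreis--Palamodov Theorem \ref{EPT} to $I_k$ yields operators $A_i(z,\partial)=\sum_j c_{i,j}p_j(z)\partial^j$ that detect membership in $I_k$ on $Z(I_k)$; since $q\notin I_k$, there is an index $i$ and a point $\lambda\in Z(I_k)$ with $[A_i(z,\partial)q](\lambda)\neq 0$. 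The decisive observation is that freezing the polynomial coefficients at $\lambda$ turns $A_i$ into the constant-coefficient operator
\[
P(\partial)=\sum_j c_{i,j}p_j(\lambda)\partial^j,
\]
which satisfies $[A_i(z,\partial)p](\lambda)=[P(\partial)p](\lambda)$ for every polynomial $p$. Hence $[P(\partial)q](\lambda)\neq 0$, so $\int_X [P(\partial)Q_x](\lambda)\,d\mu(x)\neq 0$. It remains to check that $g\colon x\mapsto P(\partial)Q_x(\lambda)$ belongs to $V$, which uses Ehrenpreis--Palamodov in the opposite direction: for any $\nu\in\Ann V$ we have $\widehat{\nu}\in J\subseteq I_k$, so $A_i(z,\partial)\widehat{\nu}$ vanishes on $Z(I_k)$, and evaluation at $\lambda\in Z(I_k)$ gives $[P(\partial)\widehat{\nu}](\lambda)=[A_i(z,\partial)\widehat{\nu}](\lambda)=0$; thus $g$ annihilates $\Ann V$ and $g\in V$. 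Since $\lambda\in Z(I_k)\subseteq Z(J)$ the exponential $x\mapsto Q_x(\lambda)$ also lies in $V$, so $g$ is one of the admissible generators of $W$, yet $\int_X g\,d\mu\neq 0$; this contradicts $\mu\in\Ann W$ and establishes $\Ann W\subseteq\Ann V$.

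I expect the main obstacle to be conceptual rather than computational: setting up the annihilator duality so that density is genuinely equivalent to equality of annihilators, and, above all, recognizing that the variable-coefficient Noetherian operators of Theorem \ref{EPT} collapse, upon evaluation at a fixed common zero $\lambda$, into exactly the constant-coefficient operators $P(\partial)$ generating the moment functions of Theorem \ref{poldifmom}. The subtlest point is verifying that the constructed $g$ really lands in $V$ rather than merely pairing nontrivially with $\mu$; this is the step that consumes both directions of the Ehrenpreis--Palamodov characterization simultaneously.
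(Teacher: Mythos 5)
Your proof is correct and follows essentially the same route as the paper: annihilator duality between varieties and ideals, the identification of the Fourier algebra of $X$ with $\C[z_1,\dots,z_d]$, the Lasker--Noether primary decomposition of the ideal corresponding to $V^{\perp}$, and the Ehrenpreis--Palamodov theorem, concluded by duality. You in fact spell out two steps the paper leaves implicit --- freezing the polynomial coefficients of the Noetherian operators at the point $\lambda$ to obtain the constant-coefficient operators $P(\partial)$, and verifying that the resulting function $x\mapsto [P(\partial)Q_x](\lambda)$ really lies in $V$ --- so your write-up is a more detailed rendering of the same argument.
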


\begin{proof}
We use the fact that, for each variety on $X$, if $V^{\perp}$ denotes the orthogonal complement  of $V$, that is, the set of all measures $\mu$ in $\mathcal M_c(X)$ such that $\int_X f\,d\mu=0$ for each $f$ in $V$, then $V^{\perp}$ is an ideal in $\mathcal M_c(X)$. Similarly, for each ideal $I$, the annihilator $I^{\perp}$ is the set of all functions in $\mathcal C(X)$ such that $\int f\,d\mu=0$ for each $\mu$ in $I$, and it is a variety in $\mathcal C(X)$. In addition, we have $V^{\perp\perp}=V$. (see e.g. \cite[Theorem 5, Theorem 16]{MR4217968}). In our case, in particular, $V^{\perp}$ is a proper ideal. By the Noether--Lasker Theorem,  (\cite[Theorem 7.13]{MR0242802}) $I$ is an intersection of (finitely many) primary ideals. It follows from the Ehrenpreis--Palamodov Theorem \ref{EPT} that, for each $\lambda$ in $V$there is a set $\mathcal P_{\lambda}$ of polynomials such that the measure $\mu$ in $\mathcal M_c(X)$ annihilates $V$ if and only if 
$$
[P(\partial)\widehat{\mu}](\lambda)=\int_X [P(\partial)Q_x](\lambda)\,d\mu(x)=0
$$
holds for each $\lambda$ in $V$ and $P$ in $\mathcal P_{\lambda}$. In other words, all exponential polynomials $x\mapsto [P(\partial)Q_x](\lambda)$ with $x\mapsto Q_x(\lambda)$ in $V$ and $P$ in $\mathcal P_{\lambda}$ belong to $V$, and their linear hull is dense in $V$. 
\end{proof}

From this theorem we infer that every exponential monomial is a linear combination of moment functions.

\begin{cor}\label{main}
Let $X$ be a polynomial hypergroup. Then every exponential polynomial on $X$ is a linear combination of moment functions contained in its variety.
\end{cor}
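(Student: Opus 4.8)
The plan is to read the corollary off Theorem \ref{momspec} and Theorem \ref{poldifmom}, with finite dimensionality serving as the bridge between them. Let $f$ be an exponential polynomial on the polynomial hypergroup $X$ in $d$ variables associated with $\{Q_x:\,x\in X\}$, and put $V=\tau(f)$. By the definition of an exponential polynomial, $V$ is finite dimensional; since $X$ is infinite and discrete, $\mathcal C(X)$ is infinite dimensional, so $V$ is a proper variety and Theorem \ref{momspec} applies to it.

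First I would apply Theorem \ref{momspec} to $V$: the functions $x\mapsto [P(\partial)Q_x](\lambda)$ that lie in $V$, with $x\mapsto Q_x(\lambda)$ an exponential in $V$, span a dense subspace of $V$. This is where finite dimensionality enters, since a dense linear subspace of a finite dimensional topological vector space is the whole space. Hence these functions actually span $V$, and in particular $f$ is a finite linear combination of functions of the form $h(x)=[P(\partial)Q_x](\lambda)$, each of which belongs to $V=\tau(f)$.

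Next I would decompose each building block. Writing $P(\partial)=\sum_{|\alpha|\le N}a_\alpha\partial^\alpha$ and using linearity,
$$
h(x)=\sum_{|\alpha|\le N}a_\alpha\,[\partial^\alpha Q_x](\lambda),
$$
and by Theorem \ref{poldifmom} every $x\mapsto [\partial^\alpha Q_x](\lambda)$ is a moment function belonging to the rank $d$ moment function sequence generated by the exponential $x\mapsto Q_x(\lambda)$. This already exhibits $f$ as \emph{some} linear combination of moment functions.

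The clause ``contained in its variety'' is where the real work lies, and I expect it to be the main obstacle. It is not enough that $f$ be some combination of moment functions, because the individual functions $x\mapsto[\partial^\alpha Q_x](\lambda)$ need not belong to $V$: already $\partial_1 Q_\cdot(\lambda)+\partial_2 Q_\cdot(\lambda)=\partial_v Q_\cdot(\lambda)$ with $v=(1,1)$ is a $Q_\cdot(\lambda)$-sine function that can lie in $V$ while neither summand does. To locate moment functions genuinely inside $V$, I would exploit the structure behind Theorem \ref{momspec}. By the Noether--Lasker and Ehrenpreis--Palamodov Theorem \ref{EPT}, $V$ decomposes according to the finitely many primary components of $V^{\perp}$; each component, sitting over a single exponential $x\mapsto Q_x(\lambda)$, contributes the space of functions $x\mapsto[D Q_x](\lambda)$, where $D$ ranges over the attached space of constant coefficient differential functionals at $\lambda$. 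The decisive feature is that this space of functionals is an inverse system, hence closed under lowering the order of differentiation, and closure under lowering is exactly the compatibility built into the defining relation \eqref{Eq3} of a moment function sequence. This is what lets one select, inside each local piece of $V$, a spanning family of bona fide moment functions, the order one case being the $m$-sine functions already identified in Theorem \ref{sineX}. Assembling these over the finitely many components yields a spanning set of moment functions for $V$, all contained in $\tau(f)$, which is the assertion. Translating ``inverse system closed under lowering'' into an explicit moment function basis is the step that genuinely requires care; everything before it is bookkeeping once Theorems \ref{momspec} and \ref{poldifmom} are in hand.
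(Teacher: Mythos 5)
Your first two steps are precisely the paper's own argument: the paper deduces this corollary from Theorem \ref{momspec} (density in $V$ of the functions $x\mapsto[P(\partial)Q_x](\lambda)$ that lie in $V$) together with Theorem \ref{poldifmom} and finite dimensionality, asserting on that basis that ``$V$ is the linear span of all moment functions included in $V$''. So up to that point you have reconstructed the intended proof. Moreover, the objection you raise is genuine, and it applies to the paper's wording just as much as to your own step 2: from $h=[P(\partial)Q_\cdot](\lambda)\in V$ and $h=\sum_\alpha a_\alpha[\partial^\alpha Q_\cdot](\lambda)$ one cannot conclude that $h$ is a combination of moment functions \emph{in} $V$, since the summands need not belong to $V$. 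A concrete instance: for $h=[(\partial_1^2+\partial_2)Q_\cdot](\lambda)$ one computes $h(x*y)=h(x)m(y)+2[\partial_1Q_x](\lambda)[\partial_1Q_y](\lambda)+m(x)h(y)$, so $\tau(h)=\mathrm{span}\{m,\,[\partial_1Q_\cdot](\lambda),\,h\}$ with $m=Q_\cdot(\lambda)$, and neither $[\partial_1^2Q_\cdot](\lambda)$ nor $[\partial_2Q_\cdot](\lambda)$ lies in $\tau(h)$.

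However, your repair is not a proof, and its guiding idea is off target. You explicitly leave the decisive step undone (``translating \dots into an explicit moment function basis \dots genuinely requires care''), and the principle you propose for it --- that closure of the attached space of differential functionals under lowering the order ``is exactly'' the compatibility \eqref{Eq3} --- is not correct: the operators belonging to a moment sequence carry a Bell--Fa\`a di Bruno structure (sums over set partitions of products of directional derivatives), which is far more restrictive than closure under differentiation; conversely, no analysis of the inverse system or of the primary components is actually needed. The missing ingredient is the following lemma, absent from both your proposal and the paper's text: \emph{for every polynomial $R$ in $d$ variables with $R(0)=0$ and every $\lambda$ in $\C^d$, the function $x\mapsto[R(\partial)Q_x](\lambda)$ is a moment function.} It can be proved by writing $R$ as a finite sum of products of linear forms and realizing it as the coefficient $f_{(1,\dots,1)}$ of the rank-$r$ sequence $f_\alpha(x)=\partial_t^{\alpha}Q_x(\Lambda(t))\big|_{t=0}$, where $\Lambda(t)=\lambda+\sum_{B}v_Bt^B$ is a polynomial map (any such $\Lambda$ yields a moment sequence, because $Q_{x*y}\circ\Lambda=(Q_x\circ\Lambda)\cdot(Q_y\circ\Lambda)$ and one applies the Leibniz rule in $t$), the nonzero coefficients $v_B$ being supported on a family of subsets $B\subseteq\{1,\dots,r\}$ designed so that the only partitions of $\{1,\dots,r\}$ into members of the family are those producing the desired products. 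Granting this lemma, the corollary follows from your step 1 alone: each spanning function $h=[P(\partial)Q_\cdot](\lambda)\in V$ splits as $P(0)\,m+[(P-P(0))(\partial)Q_\cdot](\lambda)$, where $m\in V$ is a moment function and the second term is a moment function lying in $V$ because $h$ and $m$ do. Without this lemma or an equivalent substitute, your argument --- like the paper's --- stops short of the clause ``contained in its variety''.
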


\begin{cor}
Let $X$ be a polynomial hypergroup in $d$ variables. Then every exponential polynomial on $X$ has the form $x\mapsto P(\partial)Q_x(\lambda)$ with some complex polynomial $P$ in $d$ variables and some complex number $\lambda$. 
\end{cor}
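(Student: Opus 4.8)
The plan is to obtain the single evaluation point $\lambda$ from the requirement that the whole variety in play carry only one exponential, and then to read the single polynomial $P$ off Theorem~\ref{momspec}. Accordingly I would first pass to the indecomposable case. If $f$ is an exponential polynomial whose variety is indecomposable, then $f$ is an exponential monomial, so by Theorem~\ref{findif} there is a unique exponential $m$ with $\Delta_{m;y_1,\dots,y_{n+1}}*f=0$ for all $y_i$; on a polynomial hypergroup this $m$ has the form $x\mapsto Q_x(\lambda)$ for a single $\lambda$ in $\C^d$ (a complex vector, as the phrase ``complex number'' in the statement must be read). The content of Theorem~\ref{mommon} and of the remarks following it is precisely that no second exponential can hide in $\tau(f)$, so this $\lambda$ is unambiguous and is the point that will appear in the final formula.

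Next I would apply Theorem~\ref{momspec} to the finite dimensional, hence proper, variety $V=\tau(f)$. The theorem furnishes, for each exponential $x\mapsto Q_x(\lambda')$ lying in $V$, a set of polynomials so that the functions $x\mapsto P_i(\partial)Q_x(\lambda')$ belong to $V$ and span a dense subspace of it. Two simplifications occur here. First, $V$ being finite dimensional, ``dense'' upgrades to ``all of $V$'', so these functions genuinely span $V$. Second, the only exponential in $V$ is the single $m(x)=Q_x(\lambda)$, so the only admissible point is $\lambda'=\lambda$ and every spanning function already has the shape $x\mapsto P_i(\partial)Q_x(\lambda)$ with one and the same $\lambda$.

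It then remains to collect terms. Since $f$ lies in $V=\tau(f)$, it is a finite linear combination $f=\sum_i c_i\,P_i(\partial)Q_{\,\cdot\,}(\lambda)$, and by linearity of the assignment $P\mapsto P(\partial)$ I would set $P=\sum_i c_i P_i$, a single polynomial in $d$ variables. This yields $f(x)=[P(\partial)Q_x](\lambda)$, the asserted form with one polynomial $P$ and one $\lambda$ in $\C^d$, so that the moment functions of Theorem~\ref{poldifmom} indeed suffice.

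The main obstacle is the localization at a single point. Everything hinges on the fact that an indecomposable variety on a polynomial hypergroup meets $\mathcal E(X)\cong\C^d$ in exactly one point: equivalently, that $V^\perp$ is a \emph{primary} ideal in $\C[z_1,\dots,z_d]$ whose zero set in Theorem~\ref{EPT} is the single point $\lambda$. Granting this, all the Ehrenpreis--Palamodov differential conditions are anchored at that one $\lambda$, and that is exactly why the data produced by Theorem~\ref{momspec} do not spread over several evaluation points but collapse to a single $P(\partial)Q_x(\lambda)$. I expect the careful identification of indecomposability of $\tau(f)$ with primariness of $V^\perp$ at a single common zero --- carried out through the correspondence $V\mapsto V^\perp$ with $V^{\perp\perp}=V$ and the Noether--Lasker decomposition --- to be the delicate part of the argument.
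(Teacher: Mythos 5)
Your proposal is correct, and its engine is the same as the paper's: Theorem \ref{momspec} plus Theorem \ref{poldifmom}, with finite dimensionality of $\tau(f)$ upgrading ``dense subspace'' to ``all of $V$.'' But you go further than the printed proof, and usefully so. The paper's proof begins ``If $f$ is an exponential monomial'' and stops at the conclusion that $f$ is a linear combination of moment functions; it never carries out the localization at a single $\lambda$ nor the merging of the polynomials $P_i$ into one $P$, which are exactly the two steps you supply. Your implicit corrections to the statement are also the right reading: with a single evaluation point the claim is false for general exponential polynomials (for $\lambda_1\neq\lambda_2$ the function $x\mapsto Q_x(\lambda_1)+Q_x(\lambda_2)$ has two exponentials in its variety, while the variety of $x\mapsto P(\partial)Q_x(\lambda)$ contains only one), so ``exponential polynomial'' must be read as ``exponential monomial,'' and ``complex number $\lambda$'' as a point of $\C^d$. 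One simplification on the step you flag as delicate: you do not need the full identification of indecomposability of $\tau(f)$ with primariness of $\tau(f)^{\perp}$ via Noether--Lasker. It suffices to know that the variety of a nonzero $m$-exponential monomial contains no exponential besides $m$, and this follows verbatim from the second half of the proof of Theorem \ref{mommon}: the relation $\Delta_{m;y_1,\dots,y_{n+1}}*f=0$ puts a suitable power of the maximal ideal $M_m$ inside $\Ann\tau(f)$, hence inside $\Ann\tau(m_1)$ for any exponential $m_1$ in $\tau(f)$, and primeness of the maximal ideal $\Ann\tau(m_1)$ forces $m_1=m$. With that substitution your argument closes without gaps and is, if anything, a more complete proof than the one in the paper.
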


\begin{proof}
If $f$ is an exponential monomial, then it has a finite dimensional variety $V$. As $P(\partial)$ is a linear combination of differential operators of the form $\partial^{\alpha}$, and, by Theorem \ref{poldifmom}, $x\mapsto [\partial^{\alpha}Q_x](\lambda)$ is a moment function, the previous theorem implies that the linear combinations of all exponential monomials in $V$ span a dense subspace. By finite dimensionality, it means that $V$ is the linear span of all moment functions included in $V$, hence $f$ is a linear combination of moment functions.
\end{proof}


\end{document}